\documentclass{amsart}
\usepackage{amsfonts,amssymb,amsmath,amsthm}
\usepackage{url}
\usepackage{enumerate}
\usepackage{amssymb}
\usepackage{color}
\usepackage{verbatim} 

\urlstyle{sf}
\newtheorem{thrm}{Theorem}[section]
\newtheorem{lem}[thrm]{Lemma}
\newtheorem{prop}[thrm]{Proposition}
\newtheorem{cor}[thrm]{Corollary}
\theoremstyle{definition}
\newtheorem{definition}[thrm]{Definition}

\newtheorem{Example}[thrm]{Example}

\numberwithin{equation}{section}
\renewcommand{\theequation}{\arabic{section}.\arabic{equation}}
\renewcommand{\theequation}{\thesubsection.\arabic{equation}}


%
%

\def\a1s{a_1,\cdots, a_s}
\def\a{\alpha}

\def\aa{\mathcal A}

\def\andd{\quad\hbox{and}\quad}

\def\b{\beta}

\def\bl4{B_{\ell\geq4}}

\def\d{\delta}

\def\bbbf{\mathbb{F}}

\def\gg{{\mathcal G}}

\def\fg{\mathfrak{g}}

\def\hh{{\mathcal H}}

\def\fh{\mathfrak{h}}

\def\lam{\lambda}
\def\Lam{\Lambda}
\def\LL{\mathcal{L}}

\def\fl{\mathfrak{L}}

\def\ep{\epsilon}
\def\fm{(\cdot,\cdot)}

\def\bbbq{\mathbb{Q}}

\def\m{\mathcal{M}}

\def\1k{\frac{1}{k}}
\def\op{\oplus}
\def\ot{\otimes}

\def\la{\langle}
\def\ra{\rangle}

\def\sub{\subseteq}
\def\sg{\sigma}

\def\rcross{R^{\times}}

\def\pf{\noindent{\bf Proof. }}

\def\fp{\mathfrak{p}}

\def\v{{\mathcal V}}

\def\w{{\mathcal W}}

\def\bbbz{{\mathbb Z}}

\def\1il{1\leq i\leq\ell}

\def\rre{R_{re}}
\def\rim{R_{ns}}

\begin{document}

\title[Extended Affine Lie Superalgebras]{Extended Affine Lie Superalgebras}

\maketitle

\centerline{\bf Malihe Yousofzadeh\footnote{
 Department of Mathematics, University of Isfahan, Isfahan, Iran,
and School of Mathematics, Institute for Research in
Fundamental
Sciences (IPM), Tehran, Iran.\\\\
{\it  {\small Key Words:}}  {\small Extended affine Lie superalgebra, Extended affine root supersystem.}\\
{\it   {\small 2010 Mathematics Subject Classification}.} 17B67}}

\bigskip

\parbox{5in}{
Abstract. We introduce the notion of extended affine Lie superalgebras and investigate the properties of their root systems.  Extended affine Lie algebras, invariant affine reflection algebras, finite dimensional basic classical simple Lie superalgebras   and affine Lie superalgebras are examples of extended affine Lie superalgebras.}

\section{Introduction}  Given an arbitrary $n\times n$-matrix $A$ and a subset $\tau\sub \{1,\ldots,n\},$ one can define the contragredient  Lie superalgebra $\gg(A,\tau)$ which is presented by a finite set of generators subject to specific relations. Contragredient Lie superalgebras associated with so-called  generalized Cartan matrices are  known as Kac-Moody Lie superalgebras. These Lie superalgebras  are of great importance among contragredient Lie superalgebras; in particular,  affine Lie superalgebras, i.e., those  Kac-Moody Lie superalgebras which are  of finite growth, but not of finite dimension and  equipped with a nondegenerate invariant even supersymmetric  bilinear form, play a significant  role in the theory of Lie superalgebras.
In the past 40 years, researchers in many areas of mathematics and mathematical physics have been attracted to Kac-Moody Lie superalgebras $\gg(A,\emptyset)$ known as Kac-Moody Lie algebras. These Lie algebras are a natural generalization of finite dimensional simple Lie algebras. One of the differences between affine Lie superalgebras and affine Lie algebras is the existence of nonsingular roots i.e., roots which are orthogonal to themselves   but not to all other roots. In 1990, R. H\o egh-Krohn and B. Torresani  \cite{HT} introduced irreducible  quasi simple Lie algebras  as a generalization of both affine Lie algebras and finite dimensional simple Lie algebras over complex numbers. In 1997, the authors in   \cite{AABGP} systematically studied irreducible  quasi simple Lie algebras under the name extended affine Lie algebras.
The existence of isotropic roots, i.e., roots which are orthogonal to all other roots,  is one of the phenomena which  occurs in  extended affine Lie algebras but not in finite dimensional simple Lie algebras.
Since 1997, different generalizations of extended affine Lie algebras have been  studied; toral type extended affine Lie algebras \cite{AKY}, locally  extended affine Lie algebras \cite{MY} and invariant affine reflection algebras \cite{N1}, as a generalization of the last two stated classes,  are examples of these generalizations.

Basic classical simple Lie superalgebras, orthosymplectic Lie superalgebras of arbitrary dimension as well as specific  extensions of particular root graded Lie superalgebras satisfy certain  properties which are in fact the super version of the axioms defining  invariant affine reflection algebras. In the present work, we  study the class of Lie superalgebras satisfying  these certain properties; we introduce the notion of extended affine Lie superalgebras. Roughly speaking, an extended affine Lie superalgebra is a Lie superalgebra having a weight space decomposition with respect to a nontrivial abelian subalgebra of the even part and equipped with a nondegenerate invariant even supersymmetric  bilinear form such that the weight vectors associated with  so-called real roots are ad-nilpotent.
We prove that the even part of an extended affine Lie superalgebra is an invariant affine reflection algebra.    We show that  corresponding to  each nonisotropic root $\a$ of an extended affine Lie superalgebra $\LL,$ there exists a triple of elements of $\LL$ generating a subsuperalgebra $\gg(\a)$ isomorphic to  either $\frak{sl}_2$ or $\frak{osp}(1,2)$ depending on whether  $\a$ is even or not. Considering $\LL$ as a $\gg(\a)$-module, we can  derive    some   properties of the corresponding root system of $\LL$ which are in fact  the features  defining  extended affine root supersystems \cite{you6}. As $\frak{osp}(1,2)$-modules  are important in the theory of extended affine Lie superalgebras, we devote a section to study the module theory of $\frak{osp}(1,2).$ Although, it is an old well-known fact that finite dimensional  $\frak{osp}(1,2n)$-modules are completely reducible, using the generic features of $\frak{osp}(1,2),$ we prove that finite dimensional  $\frak{osp}(1,2)$-modules are completely reducible in  a different approach  from the  one in the literature.   We conclude the paper with  some examples showing that starting form an extended affine Lie superalgebra, one can get a new one using an affinization process.

\renewcommand{\theequation}{\thesection.\arabic{equation}}
\section{Finite Dimensional Modules of $\frak{osp}(1,2)$}
Throughout this work, $\bbbf$ is a field of characteristic zero. Unless otherwise mentioned, all vector spaces are considered over $\bbbf.$ We denote the dual space of a vector space $V$ by $V^*.$ If $V$ is a vector space graded by an abelian group, we denote the degree of a homogeneous element $x\in V$ by $|x|;$ we also  make a convention that if  $|x|$ is appeared in an expression, for an element $x$ of $V,$ by default, we assume that $x$ is homogeneous.   If $X$ is a subset of a group $A,$ by $\la X\ra,$ we mean the subgroup of $A$ generated by $X.$ Also we denote the cardinal number of a  set $S$ by $|S|;$ and for two symbols $i,j,$ by $\d_{i,j},$ we mean the Kronecker delta. For a map $f:A\longrightarrow B$ and $C\sub A,$ by $f\mid_{_C},$ we mean the restriction of $f$ to $C.$ Also we use $\uplus$ to indicate the disjoint union.

In  the present paper, by a module of a Lie superalgebra $\fg,$ we mean a superspace $\v=\v_{\bar 0}\op \v_{\bar 1}$ and a bilinear map $\cdot: \fg\times \v\longrightarrow \v$ satisfying $\fg_{\bar i}\cdot \v_{\bar j}\sub \v_{\bar i+\bar j}$ for $i,j\in\{0,1\}$ and $[x,y]\cdot v=x\cdot (y\cdot v)-(-1)^{|x||y|}y\cdot (x\cdot v)$ for all $x,y\in\fg,$ $v\in\v.$ Also by a $\fg$-module homomorphism from a $\fg$-module $\v$ to a  $\fg$-module $\w,$ we mean a linear homomorphism $ \phi$ of parity $\bar i$ ($i\in\{0,1\}$) with $\phi(x\cdot v)=(-1)^{|x||\phi|}x\cdot \phi(v)$ for $x\in \fg,v\in \v.$

Also by a {\it symmetric form} on an additive abelian group $A,$ we mean a map $\fm: A\times A\longrightarrow \bbbf$ satisfying
\begin{itemize}
\item $(a,b)=(b,a)$ for all $a,b\in A,$
\item $(a+b,c)=(a,c)+(b,c)$ and $(a,b+c)=(a,b)+(a,c)$ for all $a,b,c\in A.$
\end{itemize}
In this case, we set  $A^0:=\{a\in A\mid(a,A)=\{0\}\}$ and call it the {\it radical} of the form $\fm.$ The form is called {\it nondegenerate} if $A^0=\{0\}.$
We note that if the form is nondegenerate, $A$ is torsion free and we can identify $A$ as a subset of $\bbbq\ot_\bbbz A.$ If $A$ is a vector space over $\bbbf,$ bilinear forms are used in the usual sense.
\

We recall that $\mathfrak{osp}(1,2)$ is a subsuperalgebra of $\frak{sl}(1,2)$ for which
{\small$$
F_+:=\left(
\begin{array}{ccc}
  0 & 1 & 0 \\
  0 & 0 & 0 \\
  1 &0 & 0 \\
\end{array}
\right),
F_-:=\left(
\begin{array}{ccc}
  0 & 0 &2\\
   -2 & 0 & 0 \\
 0 & 0 & 0 \\
\end{array}
\right),  $$}  of parity one, together with  {\small $$H:=\left(
\begin{array}{ccc}
 0 & 0 & 0 \\
 0& -2 & 0 \\
 0& 0 &2 \\
 \end{array}
\right),E_+=\left(
\begin{array}{ccc}
  0 & 0 & 0 \\
  0 & 0 & 0 \\
  0 & 2 & 0 \\
\end{array}
\right), E_-=\left(
\begin{array}{ccc}
0 & 0 & 0 \\
  0 &   0&-8 \\
  0 & 0 & 0 \\
  \end{array}
\right),$$}
 of parity zero, form a basis. The triple  $(F_+,F_-,H)$ is an $\mathfrak{osp}$-triple for $\mathfrak{osp}(1,2)$ in the following sense:
\begin{definition}{\rm
Suppose that $\fg=\fg_{\bar 0}\op\fg_{\bar 1}$ is a Lie superalgebra. We call a triple $(x,y,h)$ of nonzero elements of $\fg$ an {\it $\mathfrak{sl}_2$-super triple} for $\fg$ if   \begin{itemize}
\item $\{x,y,h\}$ generates the Lie superalgebra $\fg,$
\item  $x,y$ are homogenous of the same degree,
\item $[h,x]=2x,\; [h,y]=-2y,\; [x,y]=h.$
\end{itemize}}If $x,y\in \fg_{\bar 1},$ we refer to $(x,y,h)$ as an {\it $\frak{osp}$-triple} and note that if $x,y\in\fg_{\bar 0},$ $(x,y,h)$ is an $\mathfrak{sl}_2$-triple.
\end{definition}

\begin{lem}\label{iso} Suppose that  $(x,y,h)$ is an $\mathfrak{osp}$-triple for a Lie superalgebra $\fg=\fg_{\bar 0}\op\fg_{\bar 1},$ then $(\frac{1}{4}[x,x],-\frac{1}{4}[y,y],\frac{1}{2}h)$ is an $\mathfrak{sl}_2$-triple for $\fg_{\bar 0}$ and $\fg\simeq\frak{osp}(1,2).$
\end{lem}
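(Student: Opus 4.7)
\emph{Plan.} The strategy is to verify the three $\mathfrak{sl}_2$-bracket relations for the triple $(\tfrac14[x,x], -\tfrac14[y,y], \tfrac12 h)$ by direct super-Jacobi calculation, and to use the same computations to see that $\fg$ is spanned by only five linearly independent vectors, whence $\fg\cong\mathfrak{osp}(1,2)$ and $\fg_{\bar 0}$ is generated by the proposed triple.

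Since $h$ is even, super Jacobi gives $[h,[x,x]] = [[h,x],x] + [x,[h,x]] = 4[x,x]$, from which $[\tfrac12 h, \tfrac14[x,x]] = 2\cdot\tfrac14[x,x]$; the symmetric argument gives the analogous identity for $[\tfrac12 h, -\tfrac14[y,y]]$. For the cross bracket one first uses the symmetry $[x,y]=[y,x]=h$ of odd-odd brackets together with super Jacobi (where $(-1)^{|y||x|} = -1$) to compute $[y,[x,x]] = [h,x]-[x,h] = 4x$, whence $[[x,x],y]=-4x$; a second application then gives $[[x,x],[y,y]] = [-4x,y]+[y,-4x] = -8h$, so that $[\tfrac14[x,x], -\tfrac14[y,y]] = \tfrac12 h$.

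Applying super Jacobi with $(a,b,c)=(x,x,x)$ produces $3[x,[x,x]]=0$, hence $[[x,x],x]=0$ in characteristic zero (and symmetrically $[[y,y],y]=0$). With these in hand every iterated bracket of the generating set $\{x,y,h\}$ reduces to a scalar multiple of one of $x,y,h,[x,x],[y,y]$, so these vectors span $\fg$; they are linearly independent as $\mathrm{ad}\,h$-eigenvectors of distinct eigenvalues $2,-2,0,4,-4$ (and $[x,x],[y,y]\neq 0$ because $[y,[x,x]]=4x\neq 0$). Matching the resulting five-dimensional bracket table against the displayed basis of $\mathfrak{osp}(1,2)$ yields a parity-preserving isomorphism $\fg\to\mathfrak{osp}(1,2)$ sending $x,y,h$ to $F_+,F_-,H$. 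In particular $\fg_{\bar 0} = \bbbf h\oplus\bbbf[x,x]\oplus\bbbf[y,y]$ is spanned, hence generated, by the proposed $\mathfrak{sl}_2$-triple.

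The one subtle point is the careful bookkeeping of signs in the super Jacobi identity (together with repeated use of the odd-odd symmetry $[x,y]=[y,x]=h$); the characteristic-zero hypothesis enters precisely at the deduction of $[x,[x,x]]=0$ from $3[x,[x,x]]=0$.
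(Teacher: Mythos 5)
Your proof is correct and follows essentially the same route as the paper: verify the bracket identities $[h,[x,x]]=4[x,x]$, $[[x,x],[y,y]]=-8h$, $[[x,x],x]=0$ (etc.) via the super Jacobi identity, deduce that $\{x,y,h,[x,x],[y,y]\}$ spans $\fg$, and match against the standard basis of $\frak{osp}(1,2)$. The only (immaterial) difference is that you get $[x,x]\neq 0$ from $[y,[x,x]]=4x\neq 0$ while the paper reads it off from $[[x,x],[y,y]]=-8h\neq 0$; your write-up also makes explicit the characteristic-zero step $3[x,[x,x]]=0\Rightarrow[x,[x,x]]=0$ and the $\mathrm{ad}\,h$-eigenvalue argument for linear independence, which the paper leaves implicit.
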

\pf We have
$$\begin{array}{l}\;[[x,x],[y,y]]=-8h,\;\;[h,[x,x]]=4[x,x],\;\;[h,[y,y]]=-4[y,y],\\\\
\;[[x,x],x]=0,\;\;[[y,y],y]=0.
\end{array}
$$
Therefore, we get that $(\frac{1}{4}[x,x],-\frac{1}{4}[y,y],\frac{1}{2}h)$ is an $\frak{sl}_2$-triple; in particular $[x,x]\neq0$ as well as  $[y,y]\neq0$ and we have  $\fg=\bbbf [x,x]\op\bbbf [y,y]\op\bbbf h\op\bbbf y\op\bbbf x.$ Now it follows that $\fg$ is   isomorphic to $\mathfrak{osp}(1,2).$
\qed

\begin{lem}\label{gen}
Suppose that $(e,f,h)$ is an $\frak{osp}$-triple for a Lie superalgebra $\fg.$ Assume  $(\v,\cdot)$ is a $\fg$-module with corresponding representation $\pi.$ If $\lam\in\bbbf\setminus\{-2\}$ and $u\in \v_{\bar i}$ $(i\in\{0,1\})$ are such that  $h\cdot u=\lam u$ and  $[e,e]\cdot u=0,$  then  $\fg_{\bar 0}$-submodule of $\v$ generated by $f\cdot u$ equals to $$T:=\sum_{k\in \bbbz^{\geq0}}\bbbf f^{2k}\cdot (e\cdot u)+ \sum_{k\in \bbbz^{\geq0}}\bbbf f^{2k+1}\cdot(f\cdot(e\cdot u)-(\lam+2)u)$$ in which by the action of $f^k,$ we mean $\pi(f)^k$ for all $k\in\bbbz^{\geq 0}.$
\end{lem}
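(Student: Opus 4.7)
By Lemma \ref{iso}, $\fg_{\bar 0}$ has $\mathfrak{sl}_2$-triple $(E,F,H):=(\frac{1}{4}[e,e],-\frac{1}{4}[f,f],\frac{1}{2}h)$, and since $[x,x]$ acts on any module as $2\pi(x)^2$ for an odd element $x$, these operators act on $\v$ as $\frac{1}{2}e^2$, $-\frac{1}{2}f^2$, $\frac{1}{2}h$. The hypotheses $[e,e]\cdot u = 0$ and $h\cdot u=\lam u$ translate to the statement that $u$ is an $\mathfrak{sl}_2$-highest weight vector of weight $\frac{\lam}{2}$; the same relation forces $e\cdot(e\cdot u)=0$, so $e\cdot u$ is also $\mathfrak{sl}_2$-highest weight, of weight $\frac{\lam+2}{2}$.

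The plan is to identify both $T$ and the $\fg_{\bar 0}$-submodule $N$ generated by $f\cdot u$ with $V_1+V_2$, where
$V_1:=\sum_{k\geq 0}\bbbf f^{2k}(e\cdot u)$ and $V_2:=\sum_{k\geq 0}\bbbf f^{2k+1}u$. The identity $f^{2k+1}\cdot(f(e\cdot u)-(\lam+2)u)=f^{2k+2}(e\cdot u)-(\lam+2)f^{2k+1}u$, together with the hypothesis $\lam\neq -2$, gives immediately that $T=V_1+V_2$.

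For $V_1+V_2\sub N$, I first compute $[e,e]\cdot (f\cdot u) = 2e^2(f\cdot u) = -4(e\cdot u)$, using the super-commutator $e\cdot(f\cdot u)=h\cdot u-f\cdot(e\cdot u)=\lam u-f(e\cdot u)$ together with $e\cdot(e\cdot u)=0$; this puts $e\cdot u\in N$. Since $e\cdot u$ is a highest weight vector for $\fg_{\bar 0}\simeq\mathfrak{sl}_2$, the Verma module it generates is $\sum_k \bbbf F^k(e\cdot u)$, which is $V_1$ up to nonzero scalars on $f^{2k}(e\cdot u)$. Similarly $V_2=\sum_k \bbbf F^k(f\cdot u)\sub N$.

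For the reverse inclusion, I need to check that $V_1+V_2$ is $\fg_{\bar 0}$-stable (since it contains $f\cdot u$). Closure under $H$ and $F$ is immediate from weight considerations. For $E$, the Verma module $V_1$ is automatically $E$-stable, while on $V_2$ the standard identity $[E,F^k]=kF^{k-1}(H-k+1)$ and the formula $E(f\cdot u)=-(e\cdot u)$ established above yield
$E\cdot F^k(f\cdot u)=-F^k(e\cdot u)+\tfrac{k(\lam-2k)}{2}F^{k-1}(f\cdot u)\in V_1+V_2.$
The main obstacle is precisely this last step, since $f\cdot u$ itself is not $\mathfrak{sl}_2$-highest weight; what rescues the argument is that $E(f\cdot u)$ already lies in $V_1$, whose $\fg_{\bar 0}$-stability is automatic from the highest-weight property of $e\cdot u$, making the $\lam\neq -2$ hypothesis exactly the condition needed to disentangle the two sums defining $T$.
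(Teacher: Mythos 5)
Your proof is correct. It has the same overall skeleton as the paper's (show $T$ is a $\fg_{\bar 0}$-submodule containing $f\cdot u$, then show any submodule containing $f\cdot u$ contains $T$), and both directions hinge on the same key computation $[e,e]\cdot(f\cdot u)=-4\,e\cdot u$, which places $e\cdot u$ in the submodule generated by $f\cdot u$. Where you differ is in the decomposition of $T$ and the mechanism for checking stability. The paper keeps the generators $e\cdot u$ and $w:=f\cdot(e\cdot u)-(\lam+2)u$ exactly as they appear in $T$; the point of that choice is that both are annihilated by the odd element $e$ itself, so the explicit $\frak{osp}(1,2)$ string formulas for $h\cdot(f^k\cdot x)$, $f\cdot(f^k\cdot x)$, $e\cdot(f^k\cdot x)$ apply to each string separately and $\fg_{\bar 0}$-stability of $T$ falls out with no interaction between the two strings (these same formulas and the primitivity of the generators are then reused in the proofs of Lemma \ref{int eigen} and Lemma \ref{com red}). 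You instead rewrite $T=V_1+V_2$ with the simpler second generator $f\cdot u$, which is not $E$-primitive, and compensate by working entirely inside $\fg_{\bar 0}\simeq\mathfrak{sl}_2$ via $[E,F^k]=kF^{k-1}(H-k+1)$, using that the cross term $E\cdot(f\cdot u)=-e\cdot u$ lands in the automatically stable piece $V_1$. Both arguments consume the hypothesis $\lam\neq-2$ at the same spot (passing between $T$ and the span of the vectors $f^{2k+1}\cdot u$, equivalently seeing that $f\cdot u\in T$). Your version is a clean, purely even-part argument; the paper's version is organized so that its intermediate formulas do double duty later in the section.
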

\pf
Since $h\cdot u=\lam u$ and  $e\cdot (e\cdot u)=\frac{1}{2}[e,e]\cdot u=0,$ we have $h\cdot(e\cdot u)=(\lam+2)e\cdot u,$ $h\cdot(f\cdot(e\cdot u))=\lam f\cdot(e\cdot u)$ and that
\begin{eqnarray*}
e\cdot(f\cdot(e\cdot u))=-f\cdot(e\cdot(e\cdot u))+h\cdot (e\cdot u)=(\lam+2)e\cdot u.
\end{eqnarray*}
 Now for $x\in\{f\cdot(e\cdot u)-(\lam+2)u,e\cdot u\},$ we have $e\cdot x=0$ and for $$\lam_x:=\left\{\begin{array}{ll}\lam& \hbox{if $x=f\cdot(e\cdot u)-(\lam+2)u$}\\
 \lam+2& \hbox{if $x=e\cdot u,$}\end{array}\right.$$ we have
$$\begin{array}{l}\;h\cdot (f^{k}\cdot x)=(\lam_x-2k)f^{k}\cdot x,\\
\;f\cdot (f^{k}\cdot x)=f^{k+1}\cdot x,\\\\
\;e\cdot (f^{k}\cdot x)=
\left\{\begin{array}{ll}
-k f^{k-1} \cdot x& \hbox{$k$ is even}\\
(\lam_x-(k-1))f^{k-1} \cdot x& \hbox{$k$ is odd}
\end{array}\right.
\end{array}$$ for $k\in\bbbz^{\geq 0},$ where $f^{-1}\cdot x$ is defined to be zero. Now it follows that $T$ is  invariant under the action of $[f,f],[e,e]$ and $h,$ i.e. it is  a $\fg_{\bar 0}$-submodule of $\v.$ On the other hand,  $$f\cdot u=\frac{1}{\lam+2}(f^2\cdot( e\cdot u)-f\cdot (f\cdot (e\cdot u-(\lam+2) u)))\in T.$$ Also if $S$ is a $\fg_{\bar 0}$-submodule of $\v$ containing $f\cdot u,$ then \begin{eqnarray*}
-2e\cdot u=-[h,e]\cdot u&=&-h\cdot (e\cdot u)+e\cdot (h\cdot u)\\
&=&-f\cdot (e\cdot (e\cdot u))-e\cdot (f\cdot (e\cdot u))+e\cdot (h\cdot u)\\
&=&-e\cdot (f\cdot (e\cdot u))+e\cdot (h\cdot u)\\
&=& e\cdot (e\cdot (f\cdot u))-e\cdot (h\cdot u)+e\cdot (h\cdot u)\\
&=& \frac{1}{2}[e,e]\cdot (f\cdot u)\in S
\end{eqnarray*}So $T\sub S.$ This completes the proof.
\qed

\begin{lem}\label{int eigen}
Suppose that $(\v,\cdot)$ is a finite dimensional  module for a Lie superalgebra $\fg\simeq\frak{osp}(1,2)$ with corresponding representation $\pi.$ Take $(e,f,h)$ to be an $\mathfrak{osp}$-triple for $\fg.$ Then we have the following:

(i) $\pi(h)$ is a diagonalizable  endomorphism of $\v$ with even integer eigenvalues each of which occurs with its opposite.

(ii) Suppose that  $\v$ is irreducible and $\Lam$ is the set of eigenvalues of $\pi(h).$ Then the corresponding eigenspaces are one-dimensional and  there is a nonnegative even integer $\lam$ with $\Lam=\{-\lam,-\lam+2,\ldots,\lam-2,\lam\}.$  Moreover, if $\lam\neq 0,$ $\v_{\bar 0}$ and $\v_{\bar 1}$ are irreducible $\fg_{\bar 0}$-submodules of $\v$ and  there is $i\in\{0,1\}$ such that  $\{-\frac{\lam}{2},-\frac{\lam}{2}+2,\ldots,\frac{\lam}{2}-2,\frac{\lam}{2}\}$ and $\{-\frac{\lam}{2}+1,-\frac{\lam}{2}+3,\ldots,\frac{\lam}{2}-3,\frac{\lam}{2}-1\}$ are  the set of eigenvalues of  $\frac{1}{2}\pi(h)|_{\v_{\bar i}}$ and $\frac{1}{2}\pi(h)|_{\v_{\bar i+\bar 1}}$ respectively.
\end{lem}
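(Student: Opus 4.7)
The plan is to reduce the problem to finite-dimensional $\mathfrak{sl}_2$-representation theory, using the $\mathfrak{sl}_2$-triple inside $\fg_{\bar 0}$ produced by Lemma \ref{iso} and the generation statement of Lemma \ref{gen}. By Lemma \ref{iso}, the triple $(e',f',h'):=(\tfrac{1}{4}[e,e],-\tfrac{1}{4}[f,f],\tfrac{1}{2}h)$ is an $\mathfrak{sl}_2$-triple in $\fg_{\bar 0}$, and $\v$ is finite-dimensional over the corresponding copy of $\mathfrak{sl}_2$. By Weyl's complete reducibility theorem together with the classification of irreducible finite-dimensional $\mathfrak{sl}_2$-modules, $\pi(h')=\tfrac{1}{2}\pi(h)$ is diagonalizable with integer eigenvalues occurring symmetrically about zero; doubling proves (i).

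For (ii), assume $\v$ is irreducible and let $\lambda$ denote the maximum eigenvalue of $\pi(h)$, which by (i) is a nonnegative even integer. Pick a nonzero homogeneous $h$-eigenvector $u$ of weight $\lambda$, of some parity $\bar i$. The relations $[h,e]=2e$ and $[h,[e,e]]=4[e,e]$ together with the maximality of $\lambda$ imply $e\cdot u=0$ and $[e,e]\cdot u=0$. Applying the PBW theorem to the ordered basis $e',f',h,e,f$ of $\fg$ and using the identity $f^2=\tfrac{1}{2}[f,f]$ in $U(\fg)$ (which writes $f'$ as a scalar multiple of $f^2$), every element of $U(\fg)\cdot u$ reduces to a linear combination of $\{f^k\cdot u:k\geq 0\}$; by irreducibility these span $\v$. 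Each $f^k\cdot u$ has $h$-weight $\lambda-2k$, so $h$-eigenspaces are at most one-dimensional, and letting $N$ be maximal with $f^N\cdot u\neq 0$, the $\pm$-symmetry from (i) forces $\lambda-2N=-\lambda$, i.e., $N=\lambda$; this gives $\Lambda=\{-\lambda,-\lambda+2,\ldots,\lambda\}$ with each eigenspace one-dimensional.

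Assume now $\lambda\neq 0$, so $\lambda\geq 2$ and hence $\lambda\neq -2$. Lemma \ref{gen} applies to $u$, and since $e\cdot u=0$, the set $T$ there collapses to $\sum_{k\geq 0}\bbbf f^{2k+1}\cdot u$, which is exactly $\v_{\bar i+\bar 1}$. Therefore $\v_{\bar i+\bar 1}$ is the $\fg_{\bar 0}$-submodule of $\v$ generated by $f\cdot u$. Its $h'$-weights are $\{-\tfrac{\lambda}{2}+1,-\tfrac{\lambda}{2}+3,\ldots,\tfrac{\lambda}{2}-1\}$, each with multiplicity one; this is the weight multiset of the irreducible $\mathfrak{sl}_2$-module of highest weight $\tfrac{\lambda}{2}-1$, so by Weyl's theorem and the $\mathfrak{sl}_2$ classification, $\v_{\bar i+\bar 1}$ is $\fg_{\bar 0}$-irreducible. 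The same character argument applied to $\v_{\bar i}$, whose $h'$-weight multiset is $\{-\tfrac{\lambda}{2},-\tfrac{\lambda}{2}+2,\ldots,\tfrac{\lambda}{2}\}$ with multiplicity one, yields $\fg_{\bar 0}$-irreducibility and completes (ii).

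The hardest step is the PBW reduction in the second paragraph, which forces $\v=\mathrm{span}\{f^k\cdot u:k\geq 0\}$ and thereby bootstraps the whole finite-dimensional analysis; everything downstream is a character-level consequence of the $\mathfrak{sl}_2$-theory on $\fg_{\bar 0}$ plus the symmetry from (i). The key subtlety is that one must use both $e\cdot u=0$ and $[e,e]\cdot u=0$ (equivalently $e'\cdot u=0$) to kill every PBW monomial that begins with an $e'$ or an $e$.
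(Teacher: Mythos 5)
Your proof is correct and follows essentially the same route as the paper: part (i) via the $\mathfrak{sl}_2$-triple of Lemma \ref{iso}, and part (ii) by taking a highest-weight vector $u$, showing $\v=\mathrm{span}_{\bbbf}\{f^k\cdot u\}$, and splitting by parity into two $\fg_{\bar 0}$-modules identified through their $\tfrac{1}{2}h$-weights (the paper checks the span is a submodule by computing the action of $e,f,h$ on each $f^k\cdot u$ explicitly rather than by PBW, and gets irreducibility of the two parity pieces directly from $\mathfrak{sl}_2$-theory rather than your character-multiplicity argument, but the content is the same). One small correction: for the PBW reduction to collapse as you describe, the ordered basis must place the raising operators $e$ and $e'=\tfrac{1}{4}[e,e]$ \emph{rightmost}, so that they act on $u$ first and are annihilated by $e\cdot u=[e,e]\cdot u=0$; with your stated order $e',f',h,e,f$ the ordered monomials end in $f$, and it is the monomials whose rightmost (not leftmost) factor is $e$ or $e'$ that die, so the order should be, e.g., $f',f,h,e,e'$.
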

\pf
$(i)$ We know that $\fg_{\bar 0}\simeq \frak{sl}_2(\bbbf)$ and that  $(\frac{1}{4}[e,e],-\frac{1}{4}[f,f],\frac{1}{2}h)$ is an $\mathfrak{sl}_2$-triple for $\fg_{\bar 0}.$ Considering $\v$ as a $\fg_{\bar 0}$-module and using the $\frak{sl}_2$-module theory \cite[\S III.8]{J}, we get $\pi(\frac{1}{2}h)$ acts diagonally on $\v$ with integer eigenvalues each of which occurs with its opposite. This completes the proof.

$(ii)$ Suppose that $\v$ is irreducible. Take  $\lam$ to be  the largest eigenvalue of $\pi(h)$ and fix a  homogeneous  eigenvector $v_0$ for this eigenvalue. Set $v_{-1}:=0$ and   $v_i:=\pi(f)^i(v_0),$ for $i\in\bbbz^{\geq 0}.$
 For $i\in\bbbz^{\geq 0},$ we have $$h\cdot v_i=(\lam-2i)v_i,$$ $$f\cdot v_i=v_{i+1}$$ and $$e\cdot v_i=\left\{\begin{array}{ll}
-iv_{i-1} & \hbox{$i$ is even}\\
(\lam-(i-1))v_{i-1}& \hbox{$i$ is odd}.
\end{array}\right.$$ This together with the fact that $\v$ is irreducible  shows that  $\v=\sum_{k\in\bbbz^{\geq0}}\bbbf f^k\cdot v_0$ and so  each eigenspace is one-dimensional.
Since $\lam+2$ is not an eigenvalue,  by part $(i),$ $-\lam-2$ is not an eigenvalue; in particular, $v_{\lam+1}=0.$ So $\v=\sum_{k=0}^{\lam}\bbbf f^k\cdot v_0.$
This completes the proof if $\lam=0.$ Now suppose $\lam\neq 0.$ For $i\in\{0,\ldots,\lam\},$ we have  $v_i\in \v^{m_i}$ where $m_i:=\lam-2i$ and $$\v^{m_i}:=\{v\in \v\mid h\cdot v=m_iv\}=\{v\in \v\mid \frac{1}{2}h\cdot v=\frac{1}{2}m_iv\}.$$
Set $$U:=\hbox{span}_\bbbf\{v_{2i}\mid i\in\{0,\ldots, \frac{\lam}{2}\} \}\andd W:=\hbox{span}_\bbbf\{v_{2i+1}\mid i\in\{0,\ldots, \frac{\lam}{2}-1\} \}.$$
Both $U$ and $W$ are invariant under the actions of  $\frac{1}{4}[e,e],-\frac{1}{4}[f,f],\frac{1}{2}h$ and so they are  $\fg_{\bar 0}$-submodules of $\v.$ Since $\lam$ is the largest eigenvalue for $\pi(h)$ and $h\cdot( e\cdot v_0)=(\lam+2)v_0,$ we have $e\cdot v_0=0$ and so we get $0\neq \lam v_0=h\cdot  v_0=e\cdot f\cdot v_0+f\cdot e\cdot v_0=e\cdot f\cdot v_0.$ This implies that $f\cdot v_0\neq 0.$ So $U$ and $W$ are nonzero $\fg_{\bar 0}$-submodules.
For $i\in\{0,\ldots,\lam\},$ take  $U^{\frac{m_i}{2}}:=\v^{m_i}\cap U$ and $W^{\frac{m_i}{2}}:=\v^{m_i}\cap W.$ Then we have $$U=\sum_{i=0}^{\lam/2}\bbbf v_{2i}=U^{-\frac{\lam}{2}}\op U^{-\frac{\lam}{2}+2}\op\cdots\op U^{\frac{\lam}{2}-2}\op U^{\frac{\lam}{2}}$$ and $$W=\sum_{i=0}^{\lam/2-1}\bbbf v_{2i+1}=W^{-\frac{\lam}{2}+1}\op W^{-\frac{\lam}{2}+3}\op\cdots\op W^{\frac{\lam}{2}-3}\op W^{\frac{\lam}{2}-1}.$$  Using the  standard $\mathfrak{sl}_2$-module theory, we get that $v_i\neq 0$ for $0\leq i\leq \lam$ and that both $U$ and $W$ are irreducible $\fg_{\bar 0}$-modules. If the homogeneous element   $v_0$ is of degree $\bar i$ $(i\in \{0,1\}),$ we have   $\v_{\bar i}=U$ and $\v_{\bar i+\bar 1}=W.$ This completes the proof.
\qed
\begin{cor}\label{cor1} Each  (nonzero) finite dimensional irreducible $osp(1,2)$-module is of odd dimension. Moreover, suppose that $\lam$ is a nonnegative even integer and $\v$ is a  superspace with a basis $\{v_i\mid 0\leq i\leq \lam\}$ of homogeneous elements such that  $\{v_{2i}\mid 0\leq i\leq \frac{1}{2}\lam\}$ is a basis for either of  $\v_{\bar 0}$ or $\v_{\bar 1}.$ Take $(e,f,h)$ to be an  $\frak{osp}(1,2)$-triple for a superalgebra $\fg.$ Set $v_{\lam+1}=v_{-1}:=0$ and define $\cdot:\fg\times \v\longrightarrow \v$ by
$$\begin{array}{ll}
\begin{array}{l}
f\cdot v_i:=v_{i+1},\\
e\cdot v_i:=\left\{\begin{array}{ll}
-iv_{i-1} & \hbox{$i$ is even}\\
(\lam-(i-1))v_{i-1}& \hbox{$i$ is odd,}
\end{array}\right.\\
h\cdot v_i=(\lam-2i)v_i,
\end{array}& \begin{array}{l}
\;[f,f]\cdot v_i:=2f\cdot(f\cdot v_i),\\
\;[e,e]\cdot v_i:=2e\cdot(e\cdot v_i),\\
\end{array}
\end{array} $$
for $0\leq i\leq \lam.$ Then up to isomorphism, $(\v,\cdot)$ is the unique finite dimensional irreducible  $\fg$-module of dimension $\lam+1.$
\end{cor}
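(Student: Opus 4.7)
The plan is to deduce all parts of the corollary from Lemma \ref{int eigen}. For the odd-dimensionality claim, let $\w$ be a nonzero finite-dimensional irreducible $\mathfrak{osp}(1,2)$-module. By Lemma \ref{int eigen}(ii) there is a nonnegative even integer $\lam$ such that the set of $\pi(h)$-eigenvalues on $\w$ is $\{-\lam,-\lam+2,\ldots,\lam-2,\lam\}$ and each eigenspace is one-dimensional; hence $\dim \w=\lam+1$, which is odd.

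For the uniqueness half, given such an irreducible $\w$ of dimension $\lam+1$, pick a nonzero homogeneous eigenvector $w_0$ for the top $\pi(h)$-eigenvalue $\lam$ and set $w_i:=\pi(f)^i w_0$ for $0\leq i\leq \lam$, with $w_{-1}=w_{\lam+1}:=0$. The recursion carried out inside the proof of Lemma \ref{int eigen}(ii) establishes that
\[
h\cdot w_i=(\lam-2i)w_i, \qquad f\cdot w_i=w_{i+1},
\]
and that $e\cdot w_i$ equals $-iw_{i-1}$ when $i$ is even and $(\lam-(i-1))w_{i-1}$ when $i$ is odd---exactly the formulas used to define the action on the basis $\{v_i\}$ in the corollary. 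Moreover, the parity statement at the end of that lemma says that $\{w_{2i}\}$ spans one of $\w_{\bar 0},\w_{\bar 1}$ and $\{w_{2i+1}\}$ spans the other, matching the flexibility in the choice of parity for $v_0$ in the corollary. Choosing the parity of $v_0$ to agree with that of $w_0$, the linear map $v_i\mapsto w_i$ is therefore a $\fg$-module isomorphism $\v\to\w$.

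For the existence half one must verify that the prescribed formulas really define a $\fg$-module structure and that $\v$ is irreducible. The identities $[f,f]\cdot v=2f\cdot(f\cdot v)$ and $[e,e]\cdot v=2e\cdot(e\cdot v)$ are imposed by definition, and since $e,f$ are odd they coincide with what the module axiom demands. What remains is a direct check, on each basis vector $v_i$, of the relations $[h,e]\cdot v_i=h\cdot(e\cdot v_i)-e\cdot(h\cdot v_i)$, $[h,f]\cdot v_i=h\cdot(f\cdot v_i)-f\cdot(h\cdot v_i)$, and $[e,f]\cdot v_i=e\cdot(f\cdot v_i)+f\cdot(e\cdot v_i)$ (the final sign being $+$ because $e$ and $f$ are both odd). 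Each reduces to an elementary calculation splitting into the cases $i$ even and $i$ odd; for instance, in the $[e,f]$ case both parities yield $(\lam-2i)v_i=h\cdot v_i$. Irreducibility is then automatic: any nonzero submodule of $\v$ contains a weight vector, hence some $v_j$, and alternately applying $e$ and $f$ recovers every other $v_i$. The only mildly delicate point is the parity bookkeeping in the uniqueness step, but this is exactly what the corollary's clause allowing $\{v_{2i}\}$ to be a basis of either $\v_{\bar 0}$ or $\v_{\bar 1}$ is designed to accommodate.
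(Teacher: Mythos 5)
Your argument is correct and is exactly the route the paper intends: the corollary is stated without proof precisely because the odd-dimensionality, the explicit formulas, and the uniqueness are read off directly from the statement and proof of Lemma \ref{int eigen}(ii), and the existence half is the routine verification you describe (with the $[e,f]$ check reducing to $(\lam-2i)v_i$ in both parities, and the brackets involving $[e,e]$ and $[f,f]$ following from the generator relations via the Jacobi identity). Your irreducibility argument via the distinct $h$-eigenvalues $\lam-2i$ and the nonvanishing of the coefficients $-j$ (for $0<j$ even) and $\lam-j+1$ (for $j$ odd, $j\leq\lam$) is also sound.
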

\begin{lem}\label{com red}
Each finite dimensional $\frak{osp}(1,2)$-module is completely reducible.
\end{lem}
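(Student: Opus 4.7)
I would prove this by induction on $\dim V$, with the trivial base case $\dim V=0$. For the inductive step, it suffices to show that any submodule $W$ of $V$ admits a complementary $\mathfrak{g}$-submodule; a standard argument then yields complete reducibility. Let $0 \subsetneq W \subsetneq V$; by the inductive hypothesis applied to $W$ and $V/W$ (each of strictly smaller dimension), both are themselves completely reducible. If $V/W = U_1 \oplus \cdots \oplus U_k$ with $k \geq 2$, then for each $j$ the preimage $V_j$ of $U_j$ in $V$ has $\dim V_j < \dim V$, so by induction $W$ has a complement $M_j$ inside $V_j$; the $M_j$'s then sum directly to a complement of $W$ in $V$. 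This reduces matters to the case where $V/W$ is irreducible.

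Assume, then, that $V/W$ is irreducible with top weight $\lambda$, a nonnegative even integer by Lemma \ref{int eigen}. Let $v_0 \in (V/W)^\lambda$ be a homogeneous top-weight vector, of parity $\bar i$ say. Using that $h$ acts diagonalizably on $V$ (Lemma \ref{int eigen}(i)) and that the projection $V \to V/W$ preserves both the $h$-weight-space and parity decompositions, I lift $v_0$ to some $\tilde v \in V_{\bar i}^\lambda$. Then $e \cdot \tilde v \in V_{\bar i+\bar 1}^{\lambda+2}$; and since $(V/W)^{\lambda+2} = 0$, we actually have $e \cdot \tilde v \in W_{\bar i+\bar 1}^{\lambda+2}$.

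The crucial step is to modify $\tilde v$ by an element of $W^\lambda$ so as to annihilate $e \cdot \tilde v$. Since $W = \bigoplus_\alpha W_\alpha$ is a direct sum of irreducibles by the inductive hypothesis, each summand has the explicit form described in Corollary \ref{cor1}. For an irreducible summand $W_\alpha$ of top weight $\lambda_\alpha$, those formulas show that $e : (W_\alpha)^\lambda \to (W_\alpha)^{\lambda+2}$ is an isomorphism of one-dimensional spaces when $\lambda_\alpha \geq \lambda+2$, while the codomain vanishes otherwise. Summing over $\alpha$, and noting that $e$ shifts parity by $\bar 1$, the restricted map $e: W_{\bar i}^\lambda \to W_{\bar i+\bar 1}^{\lambda+2}$ is surjective; hence there exists $w \in W_{\bar i}^\lambda$ with $e \cdot w = e \cdot \tilde v$. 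Setting $v := \tilde v - w$ yields a homogeneous lift of $v_0$ satisfying $h \cdot v = \lambda v$ and $e \cdot v = 0$, whence also $[e,e] \cdot v = 2 e \cdot (e \cdot v) = 0$.

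Finally, the chain construction in the proof of Lemma \ref{int eigen}(ii) applied to $v$ (equivalently, Corollary \ref{cor1}) produces an irreducible $\mathfrak{g}$-submodule $M$ of $V$ of dimension $\lambda+1$, with basis $\{f^k \cdot v : 0 \leq k \leq \lambda\}$. Irreducibility of $M$ together with $v \notin W$ forces $M \cap W = 0$, while irreducibility of $V/W$ and the fact that the image of $M$ contains $v_0 \neq 0$ yields $M + W = V$. Hence $V = W \oplus M$, closing the induction. The main obstacle is the adjustment step: it relies simultaneously on the inductive hypothesis that $W$ is completely reducible and on the explicit weight-by-weight description of the irreducibles of $\mathfrak{osp}(1,2)$ furnished by Corollary \ref{cor1}.
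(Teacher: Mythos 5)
Your proof is correct, but it takes a genuinely different route from the one in the paper. The paper argues globally and constructively: it decomposes $\v_{\bar 0}$ into irreducible $\fg_{\bar 0}\simeq\frak{sl}_2$-submodules with highest weight vectors $w_j$, uses Lemma \ref{gen} to describe the $\fg$-submodules $U^j=T^j+S^j$ generated by the $f\cdot w_j$, handles the one-dimensional constituents of $\v_{\bar 1}$ separately, and checks in Steps 1--5 that these pieces exhaust $\v$; irreducibility of the pieces then comes from Corollary \ref{cor1}. You instead run the classical extension-splitting induction on $\dim \v$: reduce to an extension $0\to W\to V\to V/W\to 0$ with $V/W$ irreducible of top weight $\lambda$, lift a highest weight vector, and correct the lift by an element of $W^\lambda_{\bar i}$ so that it is killed by $e$, after which the $f$-string through it gives an irreducible complement. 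The place where a Casimir operator would normally appear is replaced by your surjectivity claim for $e:W^{\lambda}_{\bar i}\to W^{\lambda+2}_{\bar i+\bar 1}$, which is valid: on an irreducible summand of top weight $\lambda_\alpha\geq\lambda+2$ the relevant coefficient in Corollary \ref{cor1} is $-i$ with $i=(\lambda_\alpha-\lambda)/2\geq 1$ even, or $\lambda_\alpha-i+1=(\lambda_\alpha+\lambda)/2+1>0$ with $i$ odd, so it never vanishes; it would be worth writing out this one-line computation, and likewise the (easy) check that $\mathrm{span}_\bbbf\{f^k\cdot v\}$ is $e$-stable and has the right dimension, which follows from the recursion in the proof of Lemma \ref{int eigen}(ii) together with Step 1 of the paper's proof. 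Your approach avoids Lemma \ref{gen} entirely and is structurally cleaner, at the cost of invoking the inductive hypothesis and the classification of irreducibles at the key step; the paper's approach is longer but exhibits the decomposition of $\v$ explicitly in terms of the $\fg_{\bar 0}$-module data, which is what it actually uses later.
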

\pf Fix an $\frak{osp}$-triple $(e,f,h)$ for $\fg:=\frak{osp}(1,2)$ and consider the $\frak{sl}_2$-triple $(\frac{1}{4}[e,e],-\frac{1}{4}[f,f],\frac{1}{2}h)$ for $\fg_{\bar 0}$ as in
Lemma \ref{iso}. Suppose that $\v$ is a finite dimensional $\frak{osp}(1,2)$-module with corresponding representation $\pi$. We know from Lemma \ref{int eigen} that  $\pi(h)$ is diagonalizable with even integer eigenvalues. We also know that $\v_{\bar 0}$ and $\v_{\bar 1}$ are both finite dimensional $\fg_{\bar 0}$-submodules of $\v.$ Suppose that $\v_{\bar 0}=\op_{j=1}^n W^j$ is a decomposition of $\v$ into irreducible $\fg_{\bar 0}$-modules. For each $1\leq j\leq n,$ take $w_j$ to be an eigenvector of the largest eigenvalue $\lam_j$ of $\pi(\frac{1}{2}h)$ on $W^j.$ We have \begin{equation}\label{final}
W^j=\op_{k=0}^{\lam_j} \bbbf [f,f]^k\cdot w_j=\op_{k=0}^{\lam_j} \bbbf f^{2k}\cdot w_j
\end{equation}
by the $\frak{sl}_2$-module theory. For $j\in\{1,\ldots,n\},$  set $$\displaystyle{ T^j:=\sum_{k=0}^{\infty}\bbbf f^{2k}\cdot (e\cdot w_j)}\andd S^j:=\displaystyle{\sum_{k=0}^{\infty}\bbbf f^{2k+1}\cdot(f\cdot (e\cdot w_j)-(2\lam_j+2)w_j)}.$$ We carry out the proof in the following steps:

\textbf{Step 1}. If $y$ is an eigenvector of $\pi(h)$ of  eigenvalue $2\lam$
such that $e\cdot y=0,$ then $ f^{2\lam+1}\cdot y=0:$ As for $k\in\bbbz^{\geq 0},$ $h\cdot(f^k\cdot y)=(\lam-2k)f^{k}\cdot y$ and $\v$ is finite dimensional, there is $k\in\bbbz^{\geq0}$ such that $f^k\cdot y\neq0$ but   $f^{k+1}\cdot y=0.$ Therefore, we have  $$0=e\cdot (f^{k+1}\cdot y)=\left\{\begin{array}{ll}
-(k+1) f^k\cdot y& \hbox{$k$ is odd}\\
(2\lam-k)f^{k}\cdot y& \hbox{$k$ is even.}
\end{array}\right.$$ This implies that $k=2\lam$ and so we are done.

\textbf{Step 2}. For each $j\in\{1,\ldots,n\},$  $$\displaystyle{T^j=\sum_{k=0}^{\lam_j+1}\bbbf f^{2k}\cdot (e\cdot w_j)}\andd \displaystyle{S^j=\sum_{k=0}^{\lam_j-1}\bbbf f^{2k+1}\cdot(f\cdot (e\cdot w_j)-(2\lam_j+2)w_j):}
$$
Since $w_j$ is an eigenvector of $\pi(h)$ with eigenvalue $2\lam_j,$ we have $h\cdot(e\cdot w_j)=(2\lam_j+2)(e\cdot w_j)$ and $h\cdot(f\cdot(e\cdot w_j))=2\lam_j f\cdot(e\cdot w_j).$ Also since $[e,e]\cdot w_j=0,$ as before, we have $$e\cdot(e\cdot w_j)=0\andd e\cdot (f\cdot (e\cdot w_j)-(2\lam_j+2)w_j)=0 $$ and so we are done using Step 1.

\textbf{Step 3}. {\small $\displaystyle{\v_{\bar 0}=\sum_{j=1}^n\sum_{k=0}^{\lam_j}\bbbf f^{2k+1}\cdot (e\cdot w_j)+\sum_{j=1}^n\sum_{k=0}^{\lam_j}\bbbf f^{2k}\cdot(f\cdot (e\cdot w_j)-(2\lam_j+2)w_j)}:$} Take $X$ to be the right hand side of this equality.
For each $j\in\{1,\ldots,n\},$ $$w_j=\frac{1}{2\lam_j+2}(f\cdot (e\cdot w_j)-(f\cdot (e\cdot w_j)-(2\lam_j+2)w_j)).$$ Therefore $w_j\in \displaystyle{\sum_{k=0}^{\lam_j}\bbbf f^{2k+1}\cdot (e\cdot w_j)+\sum_{k=0}^{\lam_j}\bbbf f^{2k}\cdot(f\cdot (e\cdot w_j)-(2\lam_j+2)w_j)\sub X}$
for all $j\in\{1,\ldots,n\}.$ This completes the proof as $X\sub \v_{\bar 0}$ and  $\{w_j\mid 1\leq j\leq n\}$ is a set of generators for the $\fg_{\bar 0}$-module $\v_{\bar 0}.$

\textbf{Step 4}.
For each $j\in\{1,\ldots,n\},$ set
$U^j$ to be the $\fg_{\bar 0}$-submodule of $\v$ generated by $f\cdot w_j,$ then  $\v_{\bar 1}=P+ \sum_{j=1}^n U^j$ in which   $P:=\{0\}$ if $\v_{\bar 1}$ has no one-dimensional irreducible $\fg_{\bar 0}$-submodule and otherwise, we take it to be the summation of all one-dimensional irreducible $\fg_{\bar 0}$-submodules of $\v_{\bar 1}:$
Take $U:=P+ \sum_{j=1}^n U^j.$ Since $\v_{\bar 1}$ is a completely reducible $\fg_{\bar 0}$-module, there is a $\fg_{\bar 0}$-submodule $K$ of $\v_{\bar 1}$ such that $\v_{\bar 1}=U\op K.$ If $K\neq \{0\},$ we pick an irreducible $\fg_{\bar 0}$-submodule $S$ of $K$ and suppose $u$ is an eigenvector for the largest  eigenvalue $\lam$ of the action $\frac{1}{2}h$ on $S.$ Since $S\cap U\sub K\cap U=\{0\},$ $S$ is not one-dimensional, so $\lam$ is positive  and $2f\cdot (f\cdot u)=[f,f]\cdot u\neq 0.$ But $f\cdot u\in \v_{\bar 0}=\sum_{j=1}^n\sum_{k=0}^{\lam_j}\bbbf f^{2k}\cdot w_j,$ so $f\cdot(f\cdot u)\in \sum_{j=1}^n\sum_{k=0}^{\lam_j}\bbbf f^{2k}\cdot f\cdot w_j\in U.$ This means that $0\neq f\cdot(f\cdot u)\in S\cap U\sub K\cap U=\{0\}, $ a contradiction. Therefore, $K=\{0\}$ and we are done.

\textbf{Step 5}. $\v$ is  a summation of irreducible  $\fg$-modules: Consider $U^j$ and $P$ as in Step 4.
Fix a basis $\{v_1,\ldots,v_m\}$ of $P$ if $P$ is not zero  and set $x_i:=f\cdot(e\cdot v_i)-2v_i$ for $i\in\{1,\ldots,m\}.$ Then for $i\in\{1,\ldots,m\},$ $\bbbf x_i$ is a trivial  one-dimensional  $\fg$-submodule of $\v$ and  as $e\cdot v_i\in \v_{\bar0},$ (\ref{final})  implies that $v_i=\frac{1}{2}(f\cdot(e\cdot v_i)-x_i)\in \sum_{j=1}^n U^j+\bbbf x_i.$ Moreover, by Lemma \ref{gen}, for $j\in\{1,\ldots,n\},$ $U^j=T^j+S^j $ and  by Steps 2,3,4, we have $$\v=\sum_{j=1}^n\sum_{k=0}^{2\lam_j+2}\bbbf f^{k}\cdot (e\cdot w_j)+\sum_{j=1}^n\sum_{k=0}^{2\lam_j}\bbbf f^{k}\cdot(f\cdot (e\cdot w_j)-(\lam_j+2)w_j)+\sum_{i=1}^m\bbbf x_i$$ in which the last part is disappeared if $P=\{0\}.$ This together with Step 1 and Corollary \ref{cor1} completes the proof.
\qed

\section{Extended Affine Lie Superalgebras}

 We call a triple   $(\LL,\hh,\fm),$ consisting of a nonzero Lie superalgebra $\LL=\LL_{\bar 0}\op\LL_{\bar1},$ a nontrivial subalgebra $\hh$ of $\LL_{\bar 0}$ and a nondegenerate  invariant even supersymmetric bilinear form $\fm$ on $\LL,$ a {\it super-toral triple} if
\begin{itemize}
\item
{\rm $\LL$ has a  weight space decomposition $\LL=\op_{\a\in\hh^*}\LL^\a$ with respect to $\hh$ via the adjoint representation. We note that in this case $\hh$ is abelian; also as $\LL_{\bar 0}$ as well as $\LL_{\bar 1}$ are $\hh$-submodules of $\LL,$ we have $\LL_{\bar 0}=\op_{\a\in \hh^*}\LL_{\bar 0}^\a$ and $\LL_{\bar 1}=\op_{\a\in\hh^*}\LL_{\bar 1}^\a$  with $\LL_{\bar i}^\a:=\LL_{\bar i}\cap \LL^\a,$ $ i=0,1$ \cite[Pro. 2.1.1]{MP}},

 \item {\rm the restriction of the form  $\fm$ to  $\hh$ is nondegenerate.}
\end{itemize}
 We call  $R:=\{\a\in \hh^*\mid \LL^\a\neq\{0\}\},$ the {\it root system} of $\LL$ (with respect to $\hh$). Each element of $R$ is called  a {\it root.} We refer to elements of $R_0:=\{\a\in \hh^*\mid \LL_{\bar 0}^\a\neq\{0\}\}$ (resp. $R_1:=\{\a\in \hh^*\mid \LL_{\bar1}^\a\neq\{0\}\}$) as {\it even roots} (resp. {\it odd roots}). We note that $R=R_0\cup R_1.$
Suppose that $(\LL,\hh,\fm)$ is a super-toral triple and  $\mathfrak{p}:\hh\longrightarrow \hh^*$ is the function  mapping   $h\in\hh$ to $(h,\cdot).$ Since the form is nondegenerate on $\hh,$ this map is one to one. So for each element $\a$ of  the image $\hh^\mathfrak{p}$ of $\hh$ under $\mathfrak{p},$ there is a unique $t_\a\in\hh$ representing $\a$ through the form $\fm.$ Now we can transfer the form on $\hh$ to a form on $\hh^\mathfrak{p},$ denoted again by $\fm,$  and defined by $$(\a,\b):=(t_\a,t_\b)\;\;\;(\a,\b\in \hh^\fp).$$

\begin{lem}
\label{symm} Suppose that $(\LL,\hh,\fm)$ is a  super-toral triple with corresponding root system $R=R_0\cup R_1.$ Then we have the following:

(i) For $\a,\b\in \hh^*,$ $[\LL^\a,\LL^\b]\sub\LL^{\a+\b}.$ Also for $i=0,1$ and $\a,\b\in R_i,$ we have $(\LL_{\bar i}^\a,\LL_{\bar i}^\b)=\{0\}$ unless $\a+\b=0;$ in particular, $R_0=-R_0$ and $R_1=-R_1.$

(ii) Suppose that  $\a\in \hh^\fp$ and $x_{\pm\a}\in\LL^{\pm\a}$ with $[x_\a,x_{-\a}]\in\hh,$ then we have $[x_\a,x_{-\a}]=(x_\a,x_{-\a})t_\a.$

(iii) Suppose that  $\a\in R_i\setminus\{0\}$ $(i\in\{0,1\}),$  $x_\a\in\LL_{\bar i}^\a$ and $x_{-\a}\in\LL_{\bar i}^{-\a}$ with  $[x_\a,x_{-\a}]\in\hh\setminus\{0\},$ then  we have $(x_\a,x_{-\a})\neq0$ and that  $\a\in\hh^\fp.$
\end{lem}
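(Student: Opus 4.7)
The plan is to derive all three parts from a single computational identity supplied by invariance of $\fm$: for any $h\in\hh$ and weight vectors $x\in\LL^\a,$ $y\in\LL^\b,$ invariance gives $([h,x],y)=(h,[x,y]),$ which rewrites as $\a(h)(x,y)=(h,[x,y]).$ I will then invoke nondegeneracy of $\fm$ on $\hh$ to turn linear-functional equalities into vector equalities.

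For part (i), the weight inclusion $[\LL^\a,\LL^\b]\subseteq \LL^{\a+\b}$ is the routine application of the super-Jacobi identity to $\ad h$ on $[x,y];$ no signs intervene since $h\in\hh$ is even. For the orthogonality, I would apply the basic identity twice, to both $([h,x],y)$ and $([h,y],x),$ and then combine these using supersymmetry $(y,x)=(-1)^{|x||y|}(x,y)$ together with super-antisymmetry $[y,x]=-(-1)^{|x||y|}[x,y]$ to produce $(\a+\b)(h)(x,y)=0$ for every $h\in\hh;$ choosing $h$ with $(\a+\b)(h)\neq 0$ then yields $(x,y)=0.$ The parity assertions $R_0=-R_0$ and $R_1=-R_1$ drop out because evenness of the form gives $(\LL_{\bar 0},\LL_{\bar 1})=\{0\},$ so $\fm$ is already nondegenerate on each $\LL_{\bar i}$ separately, and the orthogonality just proved forces $\LL_{\bar i}^\a$ to pair with $\LL_{\bar i}^{-\a}.$

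For part (ii), since $[x_\a,x_{-\a}]\in\hh,$ the basic identity specialises to $(h,[x_\a,x_{-\a}])=\a(h)(x_\a,x_{-\a})=(h,\ta)(x_\a,x_{-\a})$ for every $h\in\hh,$ and nondegeneracy of $\fm|_{\hh\times\hh}$ immediately gives $[x_\a,x_{-\a}]=(x_\a,x_{-\a})\ta.$ For part (iii), the same identity applies without assuming $\a\in\hh^\fp$ beforehand. The key move is to argue by contradiction: if $(x_\a,x_{-\a})$ were zero, the identity would force $[x_\a,x_{-\a}]$ to be a nonzero element of $\hh$ orthogonal to all of $\hh,$ violating nondegeneracy; hence $(x_\a,x_{-\a})\neq 0.$ Dividing the identity through by this scalar exhibits $\a$ as $\fp$ applied to $[x_\a,x_{-\a}]/(x_\a,x_{-\a}),$ proving $\a\in\hh^\fp$ with $\ta=[x_\a,x_{-\a}]/(x_\a,x_{-\a}).$

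I expect the only nontrivial point to be the super-sign bookkeeping in part (i): one must be disciplined with the parity shifts in supersymmetry and in the super-Jacobi identity to see the two invariance identities align into $(\a+\b)(h)(x,y)=0.$ After that, parts (ii) and (iii) are essentially one-line consequences of the fundamental identity plus nondegeneracy of $\fm$ on $\hh.$
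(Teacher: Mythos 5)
Your proposal is correct and follows essentially the same route as the paper: the single invariance identity $\a(h)(x,y)=(h,[x,y])$ combined with nondegeneracy of $\fm|_{\hh\times\hh}$ is exactly the paper's equation used for parts (ii) and (iii), and your sign bookkeeping for the orthogonality in (i) (which the paper dismisses as "easy to see") is the standard computation yielding $(\a+\b)(h)(x,y)=0$. The deduction of $R_i=-R_i$ from evenness and nondegeneracy of the form restricted to each $\LL_{\bar i}$ also matches the intended argument.
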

\pf $(i)$ It is easy to see.

$(ii)$ For $h\in\hh,$ we have \begin{equation}\label{p}(h,[x_\a,x_{-\a}])=([h,x_\a],x_{-\a})=\a(h)(x_\a,x_{-\a}).\end{equation}  Therefore we have  $$(h,[x_\a,x_{-\a}])=\a(h)(x_\a,x_{-\a})=(t_\a(x_\a,x_{-\a}),h).$$ This together with  the fact that the form on $\hh$ is symmetric and nondegenerate completes the proof.

$(iii)$ Suppose to the contrary that $(x_\a,x_{-\a})=0,$ then (\ref{p}) implies that for all $h\in\hh,$ $(h,[x_\a,x_{-\a}])=0$  but the form on $\hh$ is nondegenerate, so $[x_\a,x_{-\a}]=0,$  a contradiction. Again  using (\ref{p}), we get that $(h,\frac{1}{(x_\a,x_{-\a})}[x_\a,x_{-\a}])=\a(h)$   for all $h\in\hh$ and so $\a=\mathfrak{p}(\frac{1}{(x_\a,x_{-\a})}[x_\a,x_{-\a}])\in\hh^\fp.$
 \qed
\begin{definition}
{\rm A super-toral triple  $(\LL=\LL_{\bar 0}\op\LL_{\bar1},\hh,\fm)$ (or $\LL$ if there is no confusion),  with root system $R=R_0\cup R_1,$ is called an {\it extended affine Lie superalgebra} if}
\begin{itemize}{\rm
\item{(1)}  for $\a\in R_i\setminus\{0\}$ ($i\in\{0,1\}$), there are $x_\a\in\LL_{\bar i}^\a$ and $x_{-\a}\in\LL_{\bar i}^{-\a}$ such that  $0\neq[x_\a,x_{-\a}]\in\hh,$
    \item{(2)}  for $\a\in R$ with $(\a,\a)\neq 0$ and $x\in \LL^\a,$ $ad_x:\LL\longrightarrow\LL,$ mapping $y\in\LL$ to $[x,y],$ is a locally nilpotent linear transformation.}
\end{itemize}

{\rm The extended affine Lie superalgebra $(\LL,\hh,\fm)$ is called an {\it invariant affine reflection algebra} \cite{N1} if $\LL_{\bar1}=\{0\}$ and it is called a {\it locally extended affine Lie algebra} \cite{MY} if $\LL_{\bar1}=\{0\}$ and
$\LL^0=\hh.$ Finally  a locally extended affine Lie algebra $(\LL,\hh,\fm)$ is called an  {\it extended
affine Lie algebra} \cite{AABGP} if $\LL^0=\hh$ is a finite dimension subalgebra of $\LL.$}
\end{definition} We immediately  have  the following lemma:
\begin{prop}\label{aff-ref}
If $(\LL,\hh,\fm)$ is an extended affine Lie superalgebra, then the triple  $(\LL_{\bar 0},\hh,\fm|_{\LL_{\bar 0}\times\LL_{\bar 0}})$ is an invariant affine reflection algebra.
\end{prop}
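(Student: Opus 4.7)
The plan is to verify the definition by directly checking, in order, (a) that $(\LL_{\bar 0},\hh,\fm|_{\LL_{\bar 0}\times\LL_{\bar 0}})$ is a super-toral triple, (b) that it satisfies the two axioms (1) and (2) of an extended affine Lie superalgebra, and (c) that its odd part vanishes, so that by definition it is an invariant affine reflection algebra. Nothing subtle is expected; every ingredient has already been arranged in the setup.

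For part (a), I would first note that $\LL_{\bar 0}$ is nonzero since $\hh\subseteq\LL_{\bar 0}$ is nontrivial, and that $\hh$ is a nontrivial abelian subalgebra of $(\LL_{\bar 0})_{\bar 0}=\LL_{\bar 0}$. The form $\fm|_{\LL_{\bar 0}\times\LL_{\bar 0}}$ is clearly invariant and (super)symmetric; the point that really needs an argument is nondegeneracy. Here I would use that $\fm$ is \emph{even}: this gives $\fm(\LL_{\bar 0},\LL_{\bar 1})=\{0\}$, so any $x\in\LL_{\bar 0}$ with $\fm(x,\LL_{\bar 0})=0$ would satisfy $\fm(x,\LL)=0$, contradicting nondegeneracy of $\fm$ on $\LL$. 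The weight space decomposition $\LL_{\bar 0}=\bigoplus_{\a\in\hh^*}\LL_{\bar 0}^\a$ is already recorded in the definition of super-toral triple (with $\LL_{\bar 0}^\a=\LL_{\bar 0}\cap\LL^\a$), and the nondegeneracy of $\fm|_{\hh\times\hh}$ is inherited verbatim.

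For part (b), the root system of the triple $(\LL_{\bar 0},\hh,\fm|_{\LL_{\bar 0}\times\LL_{\bar 0}})$ is by definition $R_0$. Axiom (1) for $\LL_{\bar 0}$ is exactly the $i=0$ instance of axiom (1) for $\LL$: for each $\a\in R_0\setminus\{0\}$ there exist $x_{\pm\a}\in\LL_{\bar 0}^{\pm\a}$ with $0\neq[x_\a,x_{-\a}]\in\hh$. Axiom (2) follows because $\LL_{\bar 0}$ is an $\ad_x$-invariant subspace of $\LL$ for any $x\in\LL_{\bar 0}$ (the bracket of even elements is even), so local nilpotence of $\ad_x$ on $\LL$ immediately restricts to local nilpotence of $\ad_x|_{\LL_{\bar 0}}$ whenever $(\a,\a)\neq 0$ and $x\in\LL_{\bar 0}^\a$.

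Finally, for part (c), regarding $\LL_{\bar 0}$ as a Lie superalgebra concentrated in even degree gives trivially $(\LL_{\bar 0})_{\bar 1}=\{0\}$, so by the very definition in the paper the triple is an invariant affine reflection algebra. No step is truly hard; the only place a reader might stumble is the nondegeneracy of the restricted form, which is the one spot I would be careful to spell out by using the even parity of $\fm$.
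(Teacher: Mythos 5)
Your verification is correct and is exactly the direct check the paper has in mind when it labels Proposition \ref{aff-ref} as immediate: the super-toral axioms, condition (1) with $i=0$, and condition (2) all restrict verbatim to $\LL_{\bar 0}$, and the one genuinely non-tautological point --- nondegeneracy of $\fm|_{\LL_{\bar 0}\times\LL_{\bar 0}}$ --- is rightly settled by the evenness of $\fm$, which forces $(\LL_{\bar 0},\LL_{\bar 1})=\{0\}$. Nothing is missing.
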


\begin{Example}
{\rm Finite dimensional  basic classical simple Lie superalgebras \cite{K1} and  affine Lie superalgebras \cite{van-de} are examples of extended affine Lie superalgebras.}\hfill $\diamondsuit$
\end{Example}

In the sequel,  we shall  prove that the  root system of an extended affine Lie superalgebra $(\LL,\hh,\fm)$ is an extended affine root supersystem  in the following sense:

\begin{definition}\label{iarr}
{\rm Suppose that $A$ is a nontrivial additive abelian group, $\fm:A\times A\longrightarrow \bbbf$ is  a symmetric form  and $R$ is a subset of $A.$ Set
$$\begin{array}{l}
R^0:=R\cap A^0,\;\;\;\;\;
\rcross:=R\setminus R^0,\\\\
\rcross_{re}:=\{\a\in R\mid (\a,\a)\neq0\},\;\;\;\rre:=\rcross_{re}\cup\{0\},\\\\
\rcross_{ns}:=\{\a\in R\setminus R^0\mid (\a,\a)=0 \},\;\;\; \rim:=\rcross_{ns}\cup\{0\}.
\end{array}$$
We say $(A,\fm,R)$ is an {\it extended affine root supersystem}  if the following hold:
$$\begin{array}{ll}
(S1)& \hbox{$0\in R,$ and $\la R\ra= A,$}\\\\
(S2)& \hbox{$R=-R,$}\\\\
(S3)&\hbox{for $\a\in \rre^\times$ and $\b\in R,$ $2(\a,\b)/(\a,\a)\in\bbbz,$}\\\\
(S4)&\parbox{4.5in}{ {\it root string property} holds in $R$ in the sense that for $\a\in \rre^\times$ and $\b\in R,$  there are nonnegative  integers  $p,q$  with $2(\b,\a)/(\a,\a)=p-q$ such that \begin{center}$\{\b+k\a\mid k\in\bbbz\}\cap R=\{\b-p\a,\ldots,\b+q\a\},$\end{center}
} \\\\
(S5)&\parbox{4.5in}{for $\a\in \rim$ and $\b\in R$ with $(\a,\b)\neq 0,$
$\{\b-\a,\b+\a\}\cap R\neq \emptyset.$ }
\end{array}$$
If there is no confusion, for the sake of simplicity, we say   {\it $R$ is an extended affine root supersystem in $A.$}}
\end{definition}
%
%

\begin{lem}\label{sl2}
Suppose that   $(\LL,\hh,\fm)$ is an extended affine Lie superalgebra  with root system $R=R_0\cup R_1.$ Suppose that  $\a\in R_i$ $(i\in\{0,1\})$  with $(\a,\a)\neq 0.$ Recall that $t_\a$ is the unique element of $\hh$ representing $\a$ through the form $\fm$ restricted to $\hh$ and set $h_\a:=2t_\a/(\a,\a).$ Then  there are $y_{\pm\a}\in\LL_{\bar i}^{\pm \a}$ such that $(y_\a,y_{-\a},h_\a)$ is an $\mathfrak{sl}_2$-super triple for the subsuperalgebra $\gg(\a)$  generated by $\{y_\a,y_{-\a},h_\a\};$ in particular, if $\a\in R_1\cap \rre^\times,$ then $2\a\in R_0.$
\end{lem}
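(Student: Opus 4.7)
The plan is to promote the nonzero root vectors furnished by axiom (1) to a normalized pair $y_{\pm\a}$ realizing the $\mathfrak{sl}_2$-super triple relations with $h_\a$, and then to read off the isomorphism type of $\gg(\a)$ from Lemma \ref{iso} in the odd case (the even case being immediate).

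By axiom (1), first pick $x_\a\in\LL_{\bar i}^\a$ and $x_{-\a}\in\LL_{\bar i}^{-\a}$ with $[x_\a,x_{-\a}]\in\hh\setminus\{0\}$. Lemma \ref{symm}(ii) identifies this bracket as $(x_\a,x_{-\a})t_\a$, and Lemma \ref{symm}(iii) forces $(x_\a,x_{-\a})\neq 0$. Set
$$y_\a:=x_\a,\qquad y_{-\a}:=\frac{2}{(\a,\a)(x_\a,x_{-\a})}\,x_{-\a},$$
so that $[y_\a,y_{-\a}]=\frac{2}{(\a,\a)}t_\a=h_\a$. Using the defining property $\a(h)=(t_\a,h)$ for $h\in\hh$ together with $(t_\a,t_\a)=(\a,\a)$, a one-line computation yields $\a(h_\a)=2$, and hence $[h_\a,y_{\pm\a}]=\pm 2\,y_{\pm\a}$. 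Thus $(y_\a,y_{-\a},h_\a)$ is a triple of nonzero homogeneous elements of common parity $\bar i$ satisfying the bracket relations in the definition of an $\mathfrak{sl}_2$-super triple, and by the very definition of $\gg(\a)$ it generates that subsuperalgebra.

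It remains to identify $\gg(\a)$ case by case. If $i=0$, then super-antisymmetry gives $[y_\a,y_\a]=[y_{-\a},y_{-\a}]=0$, so $\gg(\a)=\bbbf y_\a\op\bbbf y_{-\a}\op\bbbf h_\a$; the distinct $\hh$-weights $\a,-\a,0$ force these summands to be linearly independent, and the standard $\mathfrak{sl}_2$-presentation yields $\gg(\a)\simeq\frak{sl}_2$. If $i=1$, then $(y_\a,y_{-\a},h_\a)$ is by construction an $\frak{osp}$-triple for $\gg(\a)$, and Lemma \ref{iso} applies to give $\gg(\a)\simeq\frak{osp}(1,2)$ together with $[y_\a,y_\a]\neq 0$. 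Since $y_\a\in\LL_{\bar 1}^\a$, the bracket $[y_\a,y_\a]$ is then a nonzero element of $\LL_{\bar 0}^{2\a}$, proving $2\a\in R_0$ whenever $\a\in R_1\cap\rre^\times$.

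No serious obstacle is anticipated: the heart of the argument is a single rescaling, and the genuinely structural step $\gg(\a)\simeq\frak{osp}(1,2)$ is entirely outsourced to Lemma \ref{iso}. The only mild verification is the identity $\a(h_\a)=2$, which follows directly from invariance of the form and the definition of $t_\a$.
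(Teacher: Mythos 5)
Your proof is correct and follows essentially the same route as the paper: invoke axiom (1) and Lemma \ref{symm} to get $[x_\a,x_{-\a}]=(x_\a,x_{-\a})t_\a$ with $(x_\a,x_{-\a})\neq0$, rescale to obtain the bracket $h_\a$, check $\a(h_\a)=2$, and in the odd case feed the resulting $\frak{osp}$-triple into Lemma \ref{iso} to conclude $[y_\a,y_\a]\neq0$ and hence $2\a\in R_0$. The only (harmless) difference is where you place the normalizing scalar and the extra identification of $\gg(\a)$ in the even case.
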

\pf Suppose that $i\in\{0,1\}$ and $\a\in R_i$ with $(\a,\a)\neq0.$ Fix $x_{\pm\a}\in\LL_{\bar i}^{\pm\a}$ with $0\neq[x_\a,x_{-\a}]\in\hh.$
Considering Lemma \ref{symm} and setting  $e_\a:=x_\a$ and $e_{-\a}:=x_{-\a}/(x_\a,x_{-\a}),$ we have  $[e_\a,e_{-\a}]=t_\a.$ Now  we get that   $(y_\a:=2e_\a/(t_\a,t_\a),y_{-\a}:=e_{-\a},h_\a)$ is an $\mathfrak{sl}_2$-super triple for the subsuperalgebra $\gg(\a).$  Next suppose $\a\in R_1\cap\rre^\times.$ Using Lemma \ref{iso}, we get that   $0\neq[y_\a,y_\a]\sub\LL^{2\a}_{\bar 0}$ and so $2\a\in R_0.$
\qed

\begin{lem}\label{final3}
 If  $i,j\in\{0,1\},$ $\a\in R_i$ and $\b\in R_j$ with $(\a,\b)\neq0,$ then either $\b-\a\in R$ or $\b+\a\in R.$
\end{lem}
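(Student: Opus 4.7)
The idea is a purely formal super-Jacobi computation that works uniformly for real, isotropic, even, and odd $\a$, and closely mirrors the standard proof of the analogous statement for Lie algebras.

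First I would invoke axiom (1) to choose nonzero vectors $x_\a\in\LL_{\bar i}^\a$ and $x_{-\a}\in\LL_{\bar i}^{-\a}$ with $h:=[x_\a,x_{-\a}]\in\hh\setminus\{0\}$. By Lemma \ref{symm}(iii), this forces $\a\in\hh^\fp$ and $(x_\a,x_{-\a})\neq 0$, and by Lemma \ref{symm}(ii) we then have $h=(x_\a,x_{-\a})\,t_\a$. The same argument applied to $\b$ (necessary for $(\a,\b)$ to make sense) gives $\b\in\hh^\fp$. Finally I pick a nonzero homogeneous vector $x_\b\in\LL_{\bar j}^\b$.

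Next I would evaluate $[h,x_\b]$ in two different ways. Directly, since $t_\a\in\hh$ acts on the weight space $\LL^\b$ by the scalar $\b(t_\a)=(t_\b,t_\a)=(\a,\b)$,
\[
[h,x_\b] \;=\; (x_\a,x_{-\a})\,(\a,\b)\,x_\b \;\neq\; 0,
\]
using $(x_\a,x_{-\a})\neq 0$ and the hypothesis $(\a,\b)\neq 0$. On the other hand, the super-Jacobi identity gives
\[
[[x_\a,x_{-\a}],x_\b] \;=\; [x_\a,[x_{-\a},x_\b]] \;-\; (-1)^{i}[x_{-\a},[x_\a,x_\b]],
\]
since the sign is $(-1)^{|x_\a||x_{-\a}|}=(-1)^{i^2}=(-1)^{i}$. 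As the left-hand side is nonzero, at least one of the two terms on the right must be nonzero: if $[x_\a,[x_{-\a},x_\b]]\neq 0$ then in particular $[x_{-\a},x_\b]\neq 0$, forcing $\b-\a\in R$; and if $[x_{-\a},[x_\a,x_\b]]\neq 0$ then $[x_\a,x_\b]\neq 0$, forcing $\b+\a\in R$.

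There is really no obstacle to overcome here: Lemma \ref{symm} already packages the properties of $[x_\a,x_{-\a}]$ that are needed, and the rest is the super-Jacobi expansion. What is worth emphasizing is that the argument never uses $(\a,\a)\neq 0$, so it covers both the real and the isotropic cases in one stroke, and therefore delivers exactly the axiom (S5) of Definition \ref{iarr} (while also re-deriving a weak form of the root-string property) that will be needed when showing that the root system of an extended affine Lie superalgebra is an extended affine root supersystem.
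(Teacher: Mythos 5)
Your proof is correct and is essentially the paper's own argument: both fix $x_{\pm\a}$ with $0\neq[x_\a,x_{-\a}]=(x_\a,x_{-\a})t_\a$ via Lemma \ref{symm}, act on a nonzero $x_\b\in\LL_{\bar j}^\b$ to get a nonzero multiple of $x_\b$, and expand by the super-Jacobi identity to force one of $[x_{-\a},x_\b]$, $[x_\a,x_\b]$ to be nonzero. The only (harmless) additions are your explicit remarks that $\a,\b\in\hh^\fp$ and that the sign is $(-1)^i$.
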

\pf Fix $0\neq z\in\LL_{\bar j}^\b$ and  $x\in\LL_{\bar i}^\a,y\in\LL_{\bar i}^{-\a}$ with $[x,y]\in \hh\setminus\{0\}.$ Using Lemma \ref{symm}, we have $[x,y]=(x,y)t_\a.$
Therefore we have
\begin{eqnarray*}
0\neq(x,y)(\a,\b)z=(x,y)[t_\a,z]=[[x,y],z]=[x,[y,z]]-(-1)^{|x||y|}[y,[x,z]].
\end{eqnarray*}
This in turn implies that either $[y,z]\neq 0$
or $[x,z]\neq 0.$ Therefore either $\b-\a\in R$ or $\b+\a\in R.$
 \qed

\begin{prop}\label{no hole}Suppose that   $(\LL,\hh,\fm)$ is an extended affine Lie superalgebra  with root system $R=R_0\cup R_1.$
For $\a,\b\in R$  with $(\a,\a)\neq0,$ we have the following:

(i) $\frac{2(\b,\a)}{(\a,\a)}\in\bbbz,$ in particular if $k\in\bbbf$ and $k\a\in R,$ then $k\in\{0,\pm1,\pm2,\pm1/2\}.$

(ii) $r_\a(\b):=\b-\frac{2(\a,\b)}{(\a,\a)}\a\in R.$

(iii) There are nonnegative  integers $p,q$ such that $p-q=2(\b,\a)/(\a,\a)$ and $\{k\in\bbbz\mid \b+k\a\in R\}=\{-p,\ldots,q\}.$
\end{prop}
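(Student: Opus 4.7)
The plan is to apply Lemma~\ref{sl2} to obtain an $\mathfrak{sl}_2$-super triple $(y_\alpha, y_{-\alpha}, h_\alpha)$ generating a subsuperalgebra $\gg(\alpha)$, which by Lemma~\ref{iso} is isomorphic to $\mathfrak{sl}_2$ if $\alpha \in R_0$ and to $\mathfrak{osp}(1,2)$ if $\alpha \in R_1$, and then to view $\LL$ as a $\gg(\alpha)$-module via the adjoint representation. A key preliminary is that for any weight vector $v \in \LL^\gamma$, the $\gg(\alpha)$-submodule $M_v$ generated by $v$ is finite dimensional: using the PBW theorem for $\mathcal{U}(\gg(\alpha))$, together with $h_\alpha$ acting as the scalar $\gamma(h_\alpha)$ on $v$, $M_v$ is spanned by iterates $(\ad y_{-\alpha})^{a}(\ad y_\alpha)^{c} v$, and local nilpotency of $\ad y_{\pm\alpha}$ (axiom~(2)) bounds both $a$ and $c$.

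For part~(i), take $v \in \LL^\beta \setminus \{0\}$ and apply Lemma~\ref{int eigen}(i) (or standard $\mathfrak{sl}_2$-theory when $\alpha$ is even) to $M_v$: the $h_\alpha$-eigenvalue $\beta(h_\alpha) = 2(\beta,\alpha)/(\alpha,\alpha)$ must be an integer. For the second assertion, if $k\alpha \in R$ with $k \neq 0$, then $(k\alpha, k\alpha) = k^{2}(\alpha,\alpha) \neq 0$, so applying the integrality with $\beta = k\alpha$ yields $2k \in \bbbz$, while applying it after swapping the roles of $\alpha$ and $k\alpha$ yields $2/k \in \bbbz$; the only nonzero $k$ satisfying both are $\{\pm\tfrac{1}{2}, \pm 1, \pm 2\}$.

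For parts~(ii) and~(iii), set $J := \{k \in \bbbz \mid \beta + k\alpha \in R\}$; assuming $J$ is bounded (see below), let $q := \max J$ and $-p := \min J$. Since $\beta + (q+1)\alpha \notin R$, any $u \in \LL^{\beta + q\alpha}$ is annihilated by $\ad y_\alpha$, so by Lemma~\ref{int eigen}(ii) (and its $\mathfrak{sl}_2$-analogue) the irreducible $\gg(\alpha)$-module generated by $u$ has $h_\alpha$-weights $\{-\lambda, -\lambda+2, \ldots, \lambda\}$ with $\lambda := \beta(h_\alpha) + 2q \geq 0$. This irreducible meets $\LL^{\beta + k\alpha}$ nontrivially for every $k \in \{-q - \beta(h_\alpha), \ldots, q\}$, whence $p \geq q + \beta(h_\alpha)$. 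The symmetric argument starting from a vector in $\LL^{\beta - p\alpha}$ annihilated by $\ad y_{-\alpha}$ gives $p \leq q + \beta(h_\alpha)$. Thus $p - q = \beta(h_\alpha) = 2(\beta,\alpha)/(\alpha,\alpha)$ and $J = \{-p, \ldots, q\}$, settling~(iii); and~(ii) follows since $r_\alpha(\beta) = \beta - (p-q)\alpha$ corresponds to $k = -(p-q) \in J$.

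The hard part will be establishing the boundedness of $J$: local nilpotency provides only pointwise bounds (for each $v$, $(\ad y_{\pm\alpha})^{N} v = 0$ for some $N$ depending on $v$), not a uniform bound across all weight vectors along the string. I expect to resolve this by combining complete reducibility (Lemma~\ref{com red} for odd $\alpha$, Weyl's theorem for even $\alpha$) applied to finite-dimensional $\gg(\alpha)$-submodules of $W := \bigoplus_{k \in \bbbz} \LL^{\beta + k\alpha}$ with the symmetric eigenvalue pairing from Lemma~\ref{int eigen}(i): a hypothetical unbounded ascending chain in $J$ would force an irreducible summand of arbitrarily large top weight, producing via its lowest-weight vector a longer $\ad y_\alpha$-string than local nilpotency allows.
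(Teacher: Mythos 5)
Your overall strategy (realizing $\gg(\alpha)$ via Lemma~\ref{sl2}, viewing $\bigoplus_k\LL^{\beta+k\alpha}$ as a $\gg(\alpha)$-module, using finite dimensionality of cyclic submodules coming from local nilpotency, and reading off integrality and the symmetric weight structure from $\mathfrak{sl}_2$/$\mathfrak{osp}(1,2)$ theory) is exactly the paper's, and parts (i), (ii), and the ``$J=\{-p,\ldots,q\}$ with $p-q=\beta(h_\alpha)$'' step \emph{given boundedness} are correct (your highest/lowest-weight-vector argument for the interval property is in fact a little cleaner than the paper's). However, there is a genuine gap at the one place you yourself flagged: the boundedness of $J=\{k\in\mathbb{Z}\mid \beta+k\alpha\in R\}$. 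The strategy you sketch --- that an unbounded chain in $J$ ``would force an irreducible summand of arbitrarily large top weight, producing via its lowest-weight vector a longer $\operatorname{ad}y_\alpha$-string than local nilpotency allows'' --- does not produce a contradiction. Local nilpotency is a pointwise condition: an infinite direct sum of irreducible modules of highest weights $1,2,3,\ldots$ has every weight space nonzero and still has $\operatorname{ad}y_{\pm\alpha}$ acting locally nilpotently, since each individual vector lies in a finite sum of summands and has a finite string. No single vector ever acquires an infinite $\operatorname{ad}y_\alpha$-orbit, so representation theory plus local nilpotency alone cannot rule out an unbounded string.

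The paper closes this gap with an arithmetic argument that uses part (i) applied to the \emph{other} roots along the string, not to $\alpha$. Since $(\beta+k\alpha,\beta+k\alpha)=(\beta,\beta)+2k(\beta,\alpha)+k^2(\alpha,\alpha)$ is a nonzero quadratic in $k$, for all but at most two integers $k$ the root $\beta+k\alpha$ is nonisotropic, and for all but one $k$ one has $(\alpha,\beta+k\alpha)\neq0$. If $J$ were unbounded, then for all large $k$ part (i) (with $\beta+k\alpha$ playing the role of the real root and $\alpha$ the role of the other root) forces
$$\frac{2(\alpha,\beta+k\alpha)}{(\beta+k\alpha,\beta+k\alpha)}=\frac{2(\alpha,\beta)+2k(\alpha,\alpha)}{(\beta,\beta)+2k(\alpha,\beta)+k^2(\alpha,\alpha)}\in\mathbb{Z}\setminus\{0\},$$
which is impossible because this sequence tends to $0$ as $k\to\infty$. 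You should replace your sketched module-theoretic boundedness argument with this (or an equivalent) arithmetic one; with that substitution the rest of your proof goes through.
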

\pf
Suppose that $\a,\b\in R$ with $(\a,\a)\neq0.$ Assume  $\a\in R_i$ for some $i\in\{0,1\}.$ Using Lemma \ref{sl2},  there are $y_{\a}\in\LL_{\bar i}^{\a}, y_{-\a}\in\LL_{\bar i}^{- \a}$ such that $[y_\a,y_{-\a}]=h_\a=\frac{2t_\a}{(\a,\a)}$ and $(y_\a,y_{-\a},h_\a)$ is an $\mathfrak{sl}_2$-super triple for the subsuperalgebra $\gg(\a)$ of $\LL$ generated by $\{y_\a,y_{-\a},h_\a\}.$  Consider $\LL$ as a  $\gg(\a)$-module via the adjoint representation, then  $\m:=\sum_{k\in\bbbz}\LL^{\b+k\a}$  is a $\gg(\a)$-submodule of $\LL.$  For $k\in\bbbz$ and $x\in\LL^{\b+k\a},$ set  $$\m(x):=\hbox{span}_\bbbf\{\hbox{ad}_{y_{-\a}}^n\hbox{ad}_{y_\a}^mx\mid m,n\in\bbbz^{\geq 0}\}.$$ We claim that $\m(x)$ is  the $\fg(\a)$-submodule of $\m$ generated by $x.$ Indeed, as $\gg(\a)$ is generated by $\{y_\a,y_{-\a},h_\a\},$ it is enough to show that $\m(x)$ is invariant  under  $\hbox{ad}_{h_\a},\hbox{ad}_{y_\a},\hbox{ad}_{y_{-\a}}.$ By Lemma \ref{symm}($i$), for $m,n\in\bbbz^{\geq0},$ we have $\hbox{ad}_{y_{-\a}}^n\hbox{ad}_{y_\a}^mx\in \LL^{\b+k\a+m\a-n\a},$ so $\m(x)$ is invariant under the action of $h_\a.$ Also it is trivial that $\m(x)$ is invariant under the action of $y_{-\a}.$ We finally show that it is invariant under  $\hbox{ad}_{y_\a}.$ We use  induction on $n$ to prove that $[y_\a,\hbox{ad}_{y_{-\a}}^n\hbox{ad}_{y_\a}^mx]\in\m(x)$ for $n,m\in\bbbz^{\geq0}.$
If $n=0,$ there is nothing to prove, so we assume $n\in\bbbz^{\geq1}$ and that $[y_\a,\hbox{ad}_{y_{-\a}}^{n-1}\hbox{ad}_{y_\a}^mx]\in\m(x)$ for all $m\in\bbbz^{\geq0}.$  Now for $m\in\bbbz^{\geq0},$  we have
\begin{eqnarray*}
[y_\a,\hbox{ad}_{y_{-\a}}^n\hbox{ad}_{y_\a}^mx]&=&[y_\a,[y_{-\a},\hbox{ad}_{y_{-\a}}^{n-1}\hbox{ad}_{y_\a}^mx]]\\
&=&(-1)^{|y_\a|}[y_{-\a},[y_{\a},\hbox{ad}_{y_{-\a}}^{n-1}\hbox{ad}_{y_\a}^mx]]+[h_\a,\hbox{ad}_{y_{-\a}}^{n-1}\hbox{ad}_{y_\a}^mx].
\end{eqnarray*}
This together with the induction hypothesis and the fact that $\m(x)$ is invariant under  $\hbox{ad}_{h_\a}$ and $\hbox{ad}_{y_{-\a}},$ completes the induction process. Now we are ready to prove the proposition. Keep the notations as above.

$(i)$ Since $\hbox{ad}_{y_{-\a}}$ and $\hbox{ad}_{y_\a}$ are locally nilpotent linear transformations, for $x\in\LL^{\b+k\a}$ $(k\in\bbbz),$ $\m(x)$  is  finite dimensional, so $\m$ is a  summation of the finite dimensional  $\gg(\a)$-submodules $\m(x)$ $(x\in\LL^{\b+k\a};\; k\in\bbbz).$ We know that
\begin{equation}\label{eigen}
\parbox{3.8in}{
$h_\a$ acts  diagonally on $\m$ with the set of eigenvalues  $\{\b(h_\a)+2k\mid k\in\bbbz,\;\b+k\a\in R\}.$ Moreover, this set of eigenvalues  is the union of the set of eigenvalues of the action of ${h_\a}$ on the finite dimensional $\gg(\a)$-submodules  $\m(x)$ ($x\in\LL^{\b+k\a};\;k\in\bbbz$) of $\m.$}
\end{equation}
Since $\LL^\b\neq \{0\},$ each nonzero element of $\LL^\b$ is an eigenvector of $\hbox{ad}_{h_\a}$ restricted to $\m$ corresponding to the eigenvalue $\b(h_\a).$ Therefore $\b(h_\a)$  is an eigenvalue of $\hbox{ad}_{h_\a}$ restricted to a finite dimensional $\gg(\a)$-submodule $\m(x)$ for some $x\in\LL^{\b+k\a}$ $(k\in\bbbz)$ and so using  $\frak{sl}_2$-module theory together with  Lemma \ref{int eigen}, we get that   $2(\b,\a)/(\a,\a)=\b(h_\a)\in\bbbz.$ This completes the proof.

$(ii)$ As in the previous case, $\b(h_\a)$ is an eigenvalue of  $\hbox{ad}_{h_\a}$ restricted to    a finite dimensional $\gg(\a)$-submodule $\m(x)$ of $\m,$ for some $x\in\LL^{\b+k\a}$ ($k\in\bbbz$).  From Lemma \ref{int eigen} and $\frak{sl}_2$-module theory, we know that $-\b(h_\a)$ is also an eigenvalue for $\hbox{ad}_{h_\a}$ on $\m(x).$ So there is an integer  $k$ such that $\b+k\a\in R$  and $-\b(h_\a)=\b(h_\a)+2k.$ This implies that $k=-\b(h_\a).$ In particular, $\b-2\frac{(\b,\a)}{(\a,\a)}\a=\b-\b(h_\a)\a=\b+k\a\in R.$

$(iii)$
We first prove that $\{k\in\bbbz\mid \b+k\a\in R\}$ is an interval.
To this end, we take  $r,s\in\bbbz$  with $\b+r\a,\b+s\a\in R$ and show that   $\b+t\a\in R$ for all $t$ between $r,s.$  Without loss of generality, we may assume $|\b(h_\a)+2r|\geq|\b(h_\a)+2s|.$ Suppose that   $t$ is an integer between $r,s.$
Since $\b+t\a\in R$ if and only if $-\b-t\a\in R,$ we  simultaneously replace $\b$ with $-\b$ and $(r,s)$ with $(-r,-s)$ if it is necessary and  assume $\b(h_\a)+2r\geq0.$ So $-\b(h_\a)-2r\leq \b(h_\a)+2s\leq \b(h_\a)+2r.$ But using (\ref{eigen}), we get that $\b(h_\a)+2r$ is an eigenvalue of the action of $h_\a$ on $\m(x)$ for some $x\in\LL^{\b+k\a}$ $(k\in\bbbz).$ So by  Lemmas \ref{int eigen} and \ref{com red},  $\b(h_\a)+2t$ is an eigenvalue for the action of $h_\a$ on $\m(x).$  Therefore, we have  $\b+t\a\in R$ by (\ref{eigen}).
We next show that this interval is bounded.
For $k\in\bbbz,$ we have  $(\b+k\a,\b+k\a)=(\b,\b)+2k(\b,\a)+k^2(\a,\a).$  So there are at  most two integer numbers such that $\b+k\a\not\in \rre^\times.$ Now to the contrary assume that  the mentioned interval is not bounded. Without loss of generality, we may assume  there is a positive integer $k_0$ such that for $k\in\bbbz^{\geq k_0},$ $\b+k\a\in \rre^\times$ and $(\a,\b+k\a)\neq 0.$ For $k\in\bbbz^{\geq k_0},$ we have
\begin{eqnarray*}
\frac{\frac{2(\a,\b)}{(\a,\a)}+2k}{\frac{(\b,\b)}{(\a,\a)}+\frac{2k(\a,\b)}{(\a,\a)}+k^2}&=&\frac{2(\a,\b)+2k(\a,\a)}{(\b,\b)+2k(\a,\b)+k^2(\a,\a)}\\
&=&\frac{2(\a,\b+k\a)}{(\b+k\a,\b+k\a)}\in\bbbz.\end{eqnarray*}
Now as $k,\frac{2(\a,\b)}{(\a,\a)}\in\bbbz,$ we get  that $\frac{(\b,\b)}{(\a,\a)}\in\bbbq,$ so $\lim_{k\rightarrow \infty}\frac{\frac{2(\a,\b)}{(\a,\a)}+2k}{\frac{(\b,\b)}{(\a,\a)}+\frac{2k(\a,\b)}{(\a,\a)}+k^2}=0.$ This  is a contradiction as it is a  sequence of nonzero integer numbers. Therefore we have a bounded interval. Suppose $p,q$ are the largest nonnegative integers with $\b-p\a,\b+q\a\in R.$ Since $r_\a(\b-p\a)=\b+q\a,$ we get that $p-q=\frac{2(\b,\a)}{(\a,\a)}.$ This completes the proof.
\qed

\begin{cor}
\label{cor2}
Suppose that $(\LL,\fm,\hh)$ is an extended affine Lie superalgebra with root system $R,$ then $R$ is an extended affine root supersystem in its $\bbbz$-span.
\end{cor}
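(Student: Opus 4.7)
The plan is to verify the five axioms (S1)--(S5) of Definition \ref{iarr} for the pair $(A,R)$ with $A:=\la R\ra$, equipped with the symmetric form pulled back from $\hh$ via $\fp$; once each axiom is identified with one of the statements already proved, the corollary drops out.

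First I would justify that the form makes sense on $A$. Axiom (1) of an extended affine Lie superalgebra provides, for every nonzero $\a\in R$, elements $x_{\pm\a}\in\LL^{\pm\a}_{\bar i}$ with $0\neq [x_\a,x_{-\a}]\in\hh$, so Lemma \ref{symm}(iii) places $\a$ in $\hh^\fp$. Hence $R\sub\hh^\fp\cup\{0\}$, the pairing $(\a,\b):=(t_\a,t_\b)$ is defined on all pairs of roots, and it extends by $\bbbz$-bilinearity to a symmetric form $\fm$ on $A$.

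With the form in place, (S1) is immediate: $\la R\ra=A$ by construction, and $0\in R$ since $\LL^0\supseteq\hh\neq\{0\}$. Axiom (S2) is Lemma \ref{symm}(i), which yields $R_0=-R_0$ and $R_1=-R_1$ and therefore $R=-R$. Axiom (S3), the integrality of the Cartan-type integers, is exactly Proposition \ref{no hole}(i), and the root string property (S4) is exactly Proposition \ref{no hole}(iii). For (S5), given $\a\in\rim$ and $\b\in R$ with $(\a,\b)\neq 0$, necessarily $\a\neq 0$, so $\a\in R_i\setminus\{0\}$ for some $i\in\{0,1\}$; Lemma \ref{final3} then supplies $\b-\a\in R$ or $\b+\a\in R$, which is precisely (S5).

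There is no real obstacle to overcome here: the corollary is essentially bookkeeping, assembling previously proved results into the axiomatic checklist. The only genuine subtlety is confirming at the outset that every root lies in $\hh^\fp$ so that the form is defined on all of $R\times R$, and this is exactly what axiom (1) in the definition of extended affine Lie superalgebra, combined with Lemma \ref{symm}(iii), guarantees.
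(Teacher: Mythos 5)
Your proposal is correct and follows exactly the route the paper intends: the paper's own proof is the one-line remark that the corollary is immediate from Lemma \ref{final3} and Proposition \ref{no hole}, and your write-up simply makes explicit the axiom-by-axiom bookkeeping (with (S2) from Lemma \ref{symm}(i) and the well-definedness of the form on $\la R\ra$ from Lemma \ref{symm}(iii)). No discrepancy to report.
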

\begin{proof}
It is immediate using Lemma \ref{final3} together with Proposition \ref{no hole}.
\end{proof}

\begin{prop}
\label{symm2}
Suppose that $(\LL,\fm,\hh)$ is an extended affine Lie superalgebra with root system $R.$

(i) For $\a\in \rre^\times,$ we have $2\a\not \in R_1;$ also if  $\a\in \rre$ and $2\a\not\in R,$ we have $\a\in  R_0.$

(ii) If $\a\in R_0$ with $(\a,\a)=0,$ then $(\a, R_0)=\{0\};$ moreover,  $R_0\cap \rim=\{0\}.$

(iii) If $\LL^0\sub \LL_{\bar 0},$ then we have    $ R^\times\cap R_0\cap R_1=\emptyset.$
\end{prop}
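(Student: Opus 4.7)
I would combine Proposition~\ref{no hole}(i) with Lemma~\ref{sl2}. If $\a \in \rre^\times$ and $2\a \in R_1$, then $(2\a, 2\a) = 4(\a,\a) \neq 0$ puts $2\a$ in $\rre^\times \cap R_1$, so Lemma~\ref{sl2} forces $4\a = 2(2\a) \in R_0 \sub R$; this contradicts Proposition~\ref{no hole}(i), which allows only $k \in \{0, \pm 1/2, \pm 1, \pm 2\}$ with $k\a \in R$. The second claim is similar: $\a \in \rre \setminus R_0$ forces $\a \neq 0$ (since $0 \in R_0$) and $\a \in \rre^\times \cap R_1$, whence Lemma~\ref{sl2} gives $2\a \in R_0 \sub R$, contradicting $2\a \notin R$.

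\textbf{Part (ii).} For $(\a, R_0) = \{0\}$ I argue by contradiction. First reduce to a nonisotropic $\b$: if $\b \in R_0$ is isotropic with $(\a,\b) \neq 0$, then the Jacobi calculation used in Lemma~\ref{final3}, applied inside $\LL_{\bar 0}$, produces $\a + \b$ (or $\a - \b$) in $R_0$ with $(\a+\b, \a+\b) = 2(\a,\b) \neq 0$, so we may replace $\b$ by this nonisotropic element. For $\b \in R_0 \cap \rre^\times$ with $m := 2(\a,\b)/(\b,\b) \neq 0$, iterate reflections $r_\gamma$ (Proposition~\ref{no hole}(ii)) along carefully chosen roots $\gamma$ in the $\b$-string of $\a$; integrality of the relevant Cartan integers (Proposition~\ref{no hole}(i)) then rules out each admissible value of $|m|$. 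Via Proposition~\ref{aff-ref} this is nothing but the familiar IARA statement that isotropic roots lie in the radical of the form on $\la R_0 \ra$, and can alternatively be quoted from \cite{N1}. For the moreover assertion $R_0 \cap \rim = \{0\}$ I extend orthogonality to $R_1$: given $\b \in R_1$ with $(\a,\b) \neq 0$, if $(\b,\b) \neq 0$ then Lemma~\ref{sl2} yields $2\b \in R_0$ with $(\a,2\b) = 2(\a,\b) \neq 0$, contradicting the first assertion; if $(\b,\b) = 0$, the identity $[[x_\a,x_{-\a}], y_\b] = (x_\a,x_{-\a})(\a,\b) y_\b \neq 0$ together with the super Jacobi identity forces one of $[x_\a, y_\b]$, $[x_{-\a}, y_\b]$ to be nonzero, so $\a + \b$ or $\b - \a$ lies in $R_1$, and its square equals $\pm 2(\a,\b) \neq 0$; Lemma~\ref{sl2} then produces $2(\a \pm \b) \in R_0$, and $(\a, 2\a \pm 2\b) = \pm 2(\a,\b) \neq 0$ again contradicts the first assertion.

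\textbf{Part (iii).} Suppose $\a \in \rcross \cap R_0 \cap R_1$. If $(\a,\a) = 0$ then $\a$ is an isotropic even root, so by the moreover part of (ii), $\a \in R^0$, contradicting $\a \in \rcross$. Thus $(\a,\a) \neq 0$ and $\a \in R_0 \cap \rre^\times$; Lemma~\ref{sl2} provides an $\mathfrak{sl}_2$-triple $(y_\a, y_{-\a}, h_\a)$ in $\LL_{\bar 0}$ generating $\gg(\a) \simeq \mathfrak{sl}_2$. Pick any nonzero $w \in \LL_{\bar 1}^\a$; since $\gg(\a) \sub \LL_{\bar 0}$ and $\ad(y_{\pm \a})$ is locally nilpotent by axiom~(2), the $\gg(\a)$-submodule $N \sub \LL_{\bar 1}$ generated by $w$ is finite-dimensional, and $w$ is an $h_\a$-weight vector of weight $\a(h_\a) = 2$. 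By complete reducibility of finite-dimensional $\mathfrak{sl}_2$-modules, each irreducible summand of $N$ containing a nonzero component of $w$ has highest weight at least $2$ and therefore contains a nonzero weight-$0$ vector; this vector lies in $\LL_{\bar 1}^0 \sub \LL^0$, contradicting the hypothesis $\LL^0 \sub \LL_{\bar 0}$. The main obstacle throughout is the first assertion of (ii) — the orthogonality of isotropic even roots to every even root — which carries the deep IARA-structural content; once it is established, the moreover part of (ii) and part (iii) follow by formal $\mathfrak{sl}_2$- and $\mathfrak{osp}(1,2)$-module arguments, and part (i) is immediate from Proposition~\ref{no hole}(i) and Lemma~\ref{sl2}.
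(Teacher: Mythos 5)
Your part (i) is exactly the paper's argument (Proposition~\ref{no hole}(i) rules out $4\a\in R$, Lemma~\ref{sl2} would produce it), and your ``moreover'' clause of (ii) follows the paper's route as well (reduce to a real root $\gamma$ with $(\a,\gamma)\neq 0$, note it must be odd, and use $2\gamma\in R_0$ to contradict the first assertion). Your part (iii) is correct but genuinely different: the paper fixes $e_\a\in\LL_{\bar 1}^\a$ and an even pair $x_\a,y_\a$ with $[x_\a,y_\a]=t_\a$, and kills both terms of $[[x_\a,y_\a],e_\a]$ using $\LL_{\bar 1}^0=\{0\}$ and $2\a\notin R_1$ (part (i)), getting $0\neq(\a,\a)e_\a=0$ in three lines; you instead feed the odd weight-$2$ vector into a finite-dimensional $\mathfrak{sl}_2$-module and use Weyl complete reducibility to manufacture a nonzero vector in $\LL_{\bar 1}^0$. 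Both work; yours avoids part (i) at the cost of invoking complete reducibility.

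The genuine gap is the first assertion of (ii), which you correctly identify as the crux. Your primary argument --- ``iterate reflections $r_\gamma$ along carefully chosen roots in the $\b$-string of $\a$; integrality \ldots rules out each admissible value of $|m|$'' --- is not a proof (no roots are chosen and no value of $|m|$ is actually excluded), and the strategy cannot be completed, because the statement is not a formal consequence of Proposition~\ref{no hole}. The root system of $\mathfrak{sl}(2|1)$ contains the isotropic root $\a=\ep_1-\d_1$ and the real root $\b=\ep_1-\ep_2$ with $2(\a,\b)/(\b,\b)=1$, and it satisfies every string and integrality conclusion of Proposition~\ref{no hole}; so any argument using only those combinatorial inputs would ``prove'' $(\a,\b)=0$, which is false. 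This is precisely why $\rim\neq\{0\}$ is permitted in Definition~\ref{iarr}. What distinguishes an isotropic \emph{even} root is a Lie-algebra computation in which $y_{\pm\a}\in\LL_{\bar 0}$ enters essentially: the paper first uses the limit/integrality argument to find $\gamma=\b+p\a\in R_0$ with $\gamma-\a\notin R_0$, and then proves by induction that $x_n:=(\hbox{ad}_{y_\a})^nx$ satisfies $\hbox{ad}_{y_{-\a}}(x_n)=n(\a,\gamma)x_{n-1}$, so $x_n\neq0$ for all $n$ and the $\a$-string through $\gamma$ in $R_0$ is infinite --- a contradiction. That induction relies on $\hbox{ad}_{y_{-\a}}\hbox{ad}_{y_\a}-\hbox{ad}_{y_\a}\hbox{ad}_{y_{-\a}}=\hbox{ad}_{[y_{-\a},y_\a]}$ with no sign, i.e.\ on the evenness of $\a$; for odd root vectors the super-Jacobi sign destroys it, which is exactly why odd isotropic roots escape the conclusion. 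Your fallback of quoting \cite{N1} through Proposition~\ref{aff-ref} is legitimate (the paper itself remarks that a modified version of \cite[Pro.~3.4]{MY} and \cite[Pro.~I.2.1]{AABGP} gives the assertion), but it outsources the entire content of the statement; if you want a self-contained proof, the reflection sketch must be replaced by the string argument above.
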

\pf
$(i)$  We know from Proposition \ref{no hole}($i$) that $4\a\not\in R.$ Now the result is immediate using Lemma \ref{sl2}($i$).

$(ii)$  Although  using  a modified argument as in \cite[Pro. 3.4]{MY} and \cite[Pro. I.2.1]{AABGP}, one can get the first assertion, for the convenience of readers, we give its proof. To the contrary, suppose $\a,\b\in R_0$ with $(\a,\a)= 0$ and $(\a,\b)\neq 0.$
If there are infinitely many  consecutive integers $n$ with $\b+n\a\in R_0\cap \rre^\times,$  for such integer numbers  $n,$ we have $\frac{2(\a,\b+n\a)}{(\b+n\a,\b+n\a)}=\frac{2(\a,\b)}{(\b,\b)+2n(\a,\b)}\in\bbbz.$ Therefore, we get $(\b,\b)\neq 0$ and $\frac{\frac{2(\a,\b)}{(\b,\b)}}{1+2n\frac{(\a,\b)}{(\b,\b)}}\in\bbbz$ which is absurd as it is a sequens of nonzero integer  numbers converging  to $0.$ Therefore, there is  $p\in \bbbz$ with $\gamma:=\b+p\a\in R_0$ and $\gamma-\a\not \in R_0.$
Fix $0\neq x\in\LL_{\bar 0}^\gamma$ and $y_{\pm\a}\in \LL_{\bar 0}^{\pm\a}$ with $[y_{-\a},y_\a]=t_\a.$ Setting $x_0:=x, x_n:=(ad_{y_\a})^nx,$ for $n\in\bbbz^{\geq 1},$ we have $ad_{y_{-\a}}(x_n)=n(\a,\gamma)x_{n-1}.$ So $x_n\neq0$ for all  $n\in\bbbz^{\geq 0}.$ This means that for consecutive integer numbers $k_n:=p+n$ $(n\in\bbbz^{\geq0}),$  $\b+k_n\a\in R_0,$  a contradiction.

For the last assertion, suppose $0\neq \a\in R_0\cap \rim.$ Since $\a\not\in R^0,$ there is  $\b\in R$ with $(\a,\b)\neq 0.$ If $\b\in \rim,$   there is $r\in\{\pm 1\}$  with $\a+r\b\in \rre;$ see Lemma \ref{final3}. Since $(\a+r\b,\a)\neq0,$ we always may assume there is a real root $\gamma$ with $(\a,\gamma)\neq0.$ But by the first assertion, $\gamma\in R_1\cap \rre^\times$ and so $2\gamma\in R_0$ by Lemma \ref{sl2}. This  implies that $(\a,2\gamma)=0$ which is a contradiction. This completes the proof.

$(iii)$ To the contrary, suppose that $\a\in R^\times\cap R_0\cap R_1.$ By part ($ii$), we get that $\a\in\rre^\times.$ Consider Lemma \ref{symm} and fix $x_\a\in\LL_{\bar 0}^\a,y_\a\in \LL_{\bar 0}^{-\a}$ such that $[x_\a,y_\a]=t_\a;$  also fix $e_\a\in\LL_{\bar1}^\a,$ $f_\a\in\LL_{\bar1}^{-\a}$ with $[e_\a,f_\a]=t_\a.$ We have  $[y_\a,e_\a]\in\LL_{\bar1}^0=\{0\}$ and by part ($i$), $[x_\a,e_\a]\in\LL_{\bar1}^{2\a}=\{0\}$,  so we get
\begin{eqnarray*}
0\neq (\a,\a)e_\a=[t_\a,e_\a]=[[x_\a,y_{\a}],e_\a]=[x_\a,[y_\a,e_\a]]-[y_\a,[x_\a,e_\a]]=0
\end{eqnarray*}
which   is a contradiction.
\qed

\begin{Example}{\rm
Suppose that  $\Lam$ is a torsion free additive abelian group and $\gg$ is  a locally finite  basic classical simple Lie superalgebra i.e., a direct union of finite dimensional basic classical simple Lie superalgebras (see \cite{you7} for details) with a Cartan subalgebra  $\hh$ and a fixed nondegenerate  invariant even bilinear form $f\fm.$ One knows  $(\gg,\hh,f\fm)$ is an extended affine Lie superalgebra with $\gg^0=\hh.$
 Suppose that $\theta:\Lam\times\Lam\longrightarrow \bbbf\setminus\{0\}$ is a commutative 2-cocycle, that is, $\theta$ satisfies the following properties:
$$\theta(\zeta,\xi)=\theta(\xi,\zeta)\andd \theta(\zeta,\xi)\theta(\zeta+\xi,\eta)=\theta(\xi,\eta)\theta(\zeta,\xi+\eta)$$
for all $\zeta,\xi,\eta\in \Lam$.
Suppose that $\theta(0,0)=1$ and note that this in turn  implies that $\theta(0,\lam)=1$  for all $\lam\in\Lam.$ Consider the $\bbbf$-vector space $\aa:=\sum_{\lam\in\Lam}\bbbf t^\lam$ with a basis $\{t^\lam\mid \lam\in\Lam\}.$ Now  $\aa$ together with the product defined by $$t^\zeta\cdot t^\xi:=\theta(\zeta,\xi)t^{\zeta+\xi}\;\;\;(\xi,\zeta\in\Lam)$$ is a $\Lam$-graded  unital commutative associative algebra with $\aa^\lam:=\bbbf t^\lam$ (for $\lam\in\Lam$). We refer to $\aa$ as the {\it commutative  associative torus} corresponding to $(\Lam,\theta).$
Set $$\hat \gg:=\gg\ot\aa$$ and define $$|x\ot a|:=|x|;\;\; x\in\gg,a\in\aa.$$ Then $\hat\gg$ together with
$$[x\ot a,y\ot b]_{_{\hat \gg}}:=[x,y]\ot ab$$  for $x,y\in\gg$ and $a,b\in\aa,$ is a Lie superalgebra.
Now define $$(x\ot t^\lam,y\ot t^\mu):=\theta(\lam,\mu)\d_{\lam+\mu,0}f(x,y)$$ for $x,y\in \gg$ and $\lam,\mu\in\Lam.$
This defines  a nondegenerate  invariant even supersymmetric bilinear  form on $\hat\gg.$
Next take $\v:=\bbbf\ot_\bbbz\Lam.$ Identify $\Lam$ with a subset of $\v$ and  fix a basis $B:=\{\lam_i\mid i\in I\}\sub \Lam$ of $\v.$ Suppose that $\{d_i\mid i\in I\}$ is the dual basis of $B$ and  $\v^\dag$ is the restricted dual of $\v$ with respect to this basis.
Each $d\in \v^\dag$ can be also  considered as a derivation on  $\hat \gg$ mapping $x\ot t^\lam$ to $d(\lambda) x\ot t^\lam$  for   $x\in  \gg$ and $\lambda \in \Lambda;$ indeed, for $a,b\in \gg$ and $\lam,\mu\in\Lam,$ we have \begin{eqnarray}
d([a\ot t^\lam,b\ot  t^\mu]_{_{\hat\gg}})&=&d(\lam+\mu)[a,b]\ot t^\lam t^\mu\nonumber\\
&=&d(\lam)[a,b]\ot t^\lam t^\mu+d(\mu)[a,b]\ot t^\lam t^\mu\label{e2}\nonumber\\
&=&[d(a\ot t^\lam),b\ot t^\mu]_{_{\hat\gg}}+[a\ot t^\lam,d(b\ot t^\mu)]_{_{\hat\gg}}.\nonumber
\end{eqnarray}

Also  for $a,b\in \gg,$ $d,d'\in \v^\dag$  and $\lam,\mu\in \Lam, $  we have
\begin{eqnarray}(d(a\ot t^\lam),b\ot t^\mu)=d(\lam)(a\ot t^\lam,b\ot t^\mu)
&=&d(\lam)\d_{\lam,-\mu}\theta(\lam,\mu)f(a,b)\nonumber\\
&=&-d(\mu)\d_{\lam,-\mu}\theta(\lam,\mu)f(a,b)\nonumber\\
&=&-d(\mu)(a\ot t^\lam,b\ot t^\mu)\nonumber\\
&=&-(a\ot t^\lam,d(b\ot t^\mu))\nonumber
\end{eqnarray}
and
\begin{eqnarray}
(dd'(a\ot t^\lam),b\ot t^\mu)=d(\lam)d'(\lam)(a\ot t^\lam,b\ot t^\mu)&=&d(\lam)d'(\lam)\d_{\lam+\mu,0}\theta(\lam,\mu)f(a,b)\nonumber\\
&=&-d(\lam)d'(\mu)\d_{\lam+\mu,0}\theta(\lam,\mu)f(a,b)\nonumber\\
&=&-d(\lam)d'(\mu)(a\ot t^\lam,b\ot t^\mu)\nonumber\\
&=&-(d(\lam)a\ot t^\lam,d'(\mu)b\ot t^\mu)\nonumber\\
&=&-(d(a\ot t^\lam),d'(b\ot t^\mu)).\nonumber
\end{eqnarray}

Therefore, we have
{\small\begin{equation}\label{e1}
(d(x),y)=-(x,d(y))\andd (dd'(x),y)=-(d(x),d'(y))\;\;\;\;\; (x,y\in \hat\gg,d,d'\in\v^\dag).
\end{equation}}
 Set
$$\fl:=\hat \gg\op\v\op\v^\dag=(\gg\ot \aa)\op\v\op\v^\dag$$ and define

\begin{equation}\label{bracketloop2}\begin{array}{l}
  \;[d,x]=-[x,d]=d(x),  \quad d\in\v^\dag,x\in  \hat \gg,\\
 \;[\v,\fl]=[\fl,\v]=\{0\},\\
 \;[\v^\dag,\v^\dag]=\{0\},\\
\;[x,y]=[x,y]_{_{\hat\gg}}+\sum_{i\in I}(d_i(x),y)\lam_i, \quad
x,y\in \hat\gg. \end{array}\end{equation}

\begin{lem}
Set $\fl_{\bar 0}:=\hat\gg_{\bar 0}\op\v\op\v^\dag$ and $\fl_{\bar 1}:=\hat\gg_{\bar 1},$ then  $\fl=\fl_{\bar 0}\op\fl_{\bar 1}$ together with the Lie bracket as in (\ref{bracketloop2}) is a Lie superalgebra.
\end{lem}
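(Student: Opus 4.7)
The plan is to verify the three Lie superalgebra axioms for $\fl$ in turn: preservation of the $\bbbz_2$-grading, super-skew-symmetry of the bracket, and the super-Jacobi identity. The grading is straightforward: the summands $\v$ and $\v^\dag$ sit in $\fl_{\bar 0}$; the bracket $[d,x]=d(x)$ for $d\in\v^\dag$, $x\in\hat\gg$ preserves parity since $d$ acts diagonally by scalars on each $x\ot t^\lam$; the principal term of $[x,y]$ for $x,y\in\hat\gg$ inherits its parity from $\hat\gg$; and the cocycle contribution $\sum_i(d_i(x),y)\lam_i$ lies in $\v\sub\fl_{\bar 0}$ and, because $\fm$ is even, can only be nonzero when $|x|+|y|=\bar 0$. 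For super-skew-symmetry, the only nontrivial check is the cocycle term on $\hat\gg\times\hat\gg$, which uses the supersymmetry of $\fm$ together with the identity $(d_i(x),y)=-(x,d_i(y))$ from (\ref{e1}).

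I would then verify the super-Jacobi identity $[[u,v],w]=[u,[v,w]]-(-1)^{|u||v|}[v,[u,w]]$ by cases on which of $\hat\gg$, $\v$, $\v^\dag$ contains each of $u,v,w$. Whenever any of the three lies in $\v$, the axiom $[\v,\fl]=\{0\}$ collapses every term to zero. When two of them lie in $\v^\dag$ and one in $\hat\gg$, every term on both sides reduces to an expression of the form $d\,d'(x)-d'\,d(x)$, which vanishes because elements of $\v^\dag$ act as commuting diagonal operators on the weight spaces of $\hat\gg$. These dispose of the easy cases and leave two substantial ones.

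For $x,y\in\hat\gg$ and $d\in\v^\dag$: expanding each bracket and splitting it into its $\hat\gg$-part and its $\v$-part, the $\hat\gg$-part reduces to the derivation identity $d([x,y]_{_{\hat\gg}})=[d(x),y]_{_{\hat\gg}}+[x,d(y)]_{_{\hat\gg}}$ combined with super-skew-symmetry in $\hat\gg$. The $\v$-part reduces, for each $i$, to the identity $(d_i(x),d(y))=(-1)^{|x||y|}(d_i(y),d(x))$; this follows by rewriting each side as $\pm(d_id(x),y)$ via two applications of (\ref{e1}), and then invoking supersymmetry of $\fm$ together with the commutativity of $d_i$ and $d$ on $\hat\gg$.

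The main obstacle is the case $u=x$, $v=y$, $w=z$ with all three in $\hat\gg$. The $\hat\gg$-component of the super-Jacobi relation is immediate from the super-Jacobi identity in $\hat\gg$ itself. For the $\v$-component, the coefficient of each $\lam_i$ reads
\[
(d_i([x,y]_{_{\hat\gg}}),z)-(d_i(x),[y,z]_{_{\hat\gg}})+(-1)^{|x||y|}(d_i(y),[x,z]_{_{\hat\gg}}).
\]
I would expand $d_i([x,y]_{_{\hat\gg}})=[d_i(x),y]_{_{\hat\gg}}+[x,d_i(y)]_{_{\hat\gg}}$ (since $d_i$ is an even derivation on $\hat\gg$), then apply invariance of $\fm$ to rewrite $([d_i(x),y]_{_{\hat\gg}},z)=(d_i(x),[y,z]_{_{\hat\gg}})$ (which cancels a term) and $([x,d_i(y)]_{_{\hat\gg}},z)=(x,[d_i(y),z]_{_{\hat\gg}})$, and finally use supersymmetry of $\fm$ and super-skew-symmetry of $[\cdot,\cdot]_{_{\hat\gg}}$ to rewrite the surviving expression as $-(-1)^{|x||y|}(d_i(y),[x,z]_{_{\hat\gg}})$, which cancels the last term. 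The only delicate point is the bookkeeping of the $(-1)^{\bullet}$-factors that arise from interchanging arguments in $\fm$ and in $[\cdot,\cdot]_{_{\hat\gg}}$; everything else is routine.
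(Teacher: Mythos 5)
Your proposal is correct and follows essentially the same route as the paper: super-skew-symmetry is reduced to the supersymmetry of $\fm$ together with (\ref{e1}), and the super-Jacobi identity is checked case by case, with the two substantive cases (all three elements in $\hat\gg$, and two in $\hat\gg$ with one in $\v^\dag$) handled by the derivation property of $d_i$, the invariance and supersymmetry of the form, and the commutativity of the $d_i$'s, exactly as in the paper. The only cosmetic difference is that you use the left-normed form of the Jacobi identity and cancel terms after expanding $d_i([x,y]_{_{\hat\gg}})$, whereas the paper works with the cyclic form; the underlying computation is the same.
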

\begin{proof} Since the form on $\gg$ is supersymmetric, (\ref{e1}) implies that the Lie bracket defined in  $(\ref{bracketloop2})$ is anti-supercommutative. So we just  need to verify the Jacobi superidentity.
We recall  that  the form on $\hat\gg$ is invariant and supersymmetric and that  $d\in \v^\dag$ acts as  a derivation on $\hat \gg.$  Take  $x,y,z\in \hat \gg$ and $d,d'\in\v^\dag.$ Then  we have
{\small
\begin{eqnarray*}
(d([x,y]_{\hat \gg}),z)
&\stackrel{(\ref{e1})}{=}&-([x,y]_{\hat \gg},d(z))\\
&=&-(x,[y,d(z)]_{\hat \gg})\\
&=&(-1)^{|y||z|}(x,[d(z),y]_{\hat \gg})\\
&=&(-1)^{|y||z|}(x,d([z,y]_{\hat \gg}))-(-1)^{|y||z|}(x,[z,d(y)]_{\hat \gg})\\
&=&-(x,d([y,z]_{\hat \gg}]))-(-1)^{|y||z|}([x,z]_{\hat \gg},d(y))\\
&=&-(x,d([y,z]_{\hat \gg}))+(-1)^{|y||z|+|z||x|}([z,x]_{\hat \gg},d(y))\\
&\stackrel{(\ref{e1})}{=}&-(-1)^{|x||z|}((-1)^{|x||y|}(d([y,z]_{\hat \gg}),x)+(-1)^{|y||z|}(d([z,x]_{\hat \gg}),y)).
\end{eqnarray*}}
Therefore we have
{\small \begin{eqnarray*}
&&(-1)^{|x||z|}[[x,y],z]+(-1)^{|z||y|}[[z,x],y]+(-1)^{|y||x|}[[y,z],x]\\
&=&(-1)^{|x||z|}[[x,y]_{\hat \gg},z]_{\hat \gg}+(-1)^{|x||z|}\sum_{i\in I}(d_i([x,y]_{\hat \gg}),z)\lam_i\\
&+&(-1)^{|z||y|}[[z,x]_{\hat \gg},y]_{\hat \gg}+(-1)^{|y||z|}\sum_{i\in I}(d_i([z,x]_{\hat \gg}),y)\lam_i\\
&+&(-1)^{|y||x|}[[y,z]_{\hat \gg},x]_{\hat \gg}+(-1)^{|x||y|}\sum_{i\in I}(d_i([y,z]_{\hat \gg}),x)\lam_i\\
&=&0.
\end{eqnarray*}}

Also we have
{\small
\begin{eqnarray*}
[[x,y],d]&=&[[x,y]_{_{\hat\gg}},d]\\
&=&-d([x,y]_{_{\hat\gg}}) \\
&=&-[d(x),y]_{_{\hat\gg}}-[x,d(y)]_{_{\hat\gg}}\\
&=&-[x,d(y)]_{_{\hat\gg}}+\sum_{i\in I}(dd_i(x),y)\lam_i-\sum_{i\in I}(dd_i(x),y)\lam_i-[d(x),y]_{_{\hat\gg}}\\
&=&-[x,d(y)]_{_{\hat\gg}}+\sum_{i\in I}(d_id(x),y)\lam_i-\sum_{i\in I}(dd_i(x),y)\lam_i-[d(x),y]_{_{\hat\gg}}\\
&\stackrel{(\ref{e1})}{=}&-[x,d(y)]_{_{\hat\gg}}-\sum_{i\in I}(d_i(x),d(y))\lam_i+\sum_{i\in I}(d(x),d_i(y))\lam_i-[d(x),y]_{_{\hat\gg}}\\
&=&-[x,d(y)]_{_{\hat\gg}}-\sum_{i\in I}(d_i(x),d(y))\lam_i+(-1)^{|x||y|}(\sum_{i\in I}(d_i(y),d(x))\lam_i+[y,d(x)]_{_{\hat\gg}})\\
&=&-[x,d(y)]+(-1)^{|x||y|}[y,d(x)]\\
&=&[x,[y,d]]-(-1)^{|x||y|}[y,[x,d]]
\end{eqnarray*}}
and

\begin{eqnarray*}
[[d,d'],x]=0=dd'(x)-dd'(x)=dd'(x)-d'd(x)=[d,[d',x]]-[d',[d,x]].
\end{eqnarray*}
Now the result  immediately follows.
\end{proof}
\begin{lem}
Extend the form on
$\hat\gg$
  to a supersymmetric bilinear form  $\fm_{_\fl}$ on $\fl$ by
\begin{equation}\label{formloop}
\begin{array}{l}
(\v,  \v)_{_\fl}=(\v^\dag,\v^\dag)_{_\fl}=(\v,\gg\ot\aa)_{_\fl}=(\v^\dag,\gg\ot\aa)_{_\fl}:=\{0\},\\
(v,d)_{_\fl}:=d(v),\quad  d\in \v^\dag, v\in \v.
\end{array}
\end{equation}
Then $\fm_{_\fl}$ is a nondegenerate invariant even supersymmetric bilinear form.
\end{lem}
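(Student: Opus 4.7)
The plan is to verify the three remaining properties---evenness, nondegeneracy and invariance---separately; supersymmetry is built into the definition (\ref{formloop}), since $(v,d)_{_\fl}:=d(v)$ is extended symmetrically and every other new pairing is set to $0.$ Evenness is immediate because $\v,\v^\dag\sub\fl_{\bar 0};$ the only nonzero cross-pairings either lie inside $\fl_{\bar 0}\times\fl_{\bar 0}$ (the $(\v,\v^\dag)$-pairing) or inherit evenness from the even supersymmetric form on $\hat\gg.$

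For nondegeneracy, I would take $x=x_0+v+d$ with $x_0\in\hat\gg,$ $v\in\v,$ $d\in\v^\dag,$ and assume $(x,\fl)_{_\fl}=\{0\}.$ Pairing with $\hat\gg$ annihilates the $v$- and $d$-components by (\ref{formloop}), leaving $(x_0,\hat\gg)=\{0\},$ so $x_0=0$ by nondegeneracy of $\fm$ on $\hat\gg.$ Pairing with each $d_i\in\v^\dag$ gives $d_i(v)=0,$ so $v=0$ because $\{d_i\}_{i\in I}$ is the dual basis to the basis $B$ of $\v.$ Finally $(d,v')_{_\fl}=d(v')=0$ for every $v'\in\v$ forces $d=0.$

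Invariance of $\fm_{_\fl}$ is verified by a case analysis of $([x,y],z)_{_\fl}=(x,[y,z])_{_\fl}$ over the choices of $x,y,z\in\{\hat\gg,\v,\v^\dag\}.$ Any case with a $\v$-entry in some slot is trivial, since $[\v,\fl]=\{0\}$ and $\v$ pairs nontrivially only with $\v^\dag.$ Any case with two or more $\v^\dag$-entries is also trivial, since $[\v^\dag,\v^\dag]=\{0\}$ and $[\v^\dag,\hat\gg]\sub\hat\gg$ does not pair with $\v^\dag.$ The case $x,y,z\in\hat\gg$ reduces to the invariance of $\fm$ on $\hat\gg,$ because the central correction term $\sum_i(d_i(x),y)\lam_i\in\v$ pairs trivially against $z\in\hat\gg.$ The only substantive subcases are therefore those with exactly one $\v^\dag$-entry and two $\hat\gg$-entries; here the dual-basis identity $\sum_{i\in I}d(\lam_i)d_i(\mu)=d(\mu),$ valid for every $\mu\in\Lam$ because $d\in\v^\dag$ is a finite linear combination of the $d_i,$ identifies $\sum_i d(\lam_i)(d_i(x),y)$ with $(d(x),y),$ after which the two identities of (\ref{e1}) close each subcase. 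The main obstacle is the bookkeeping across these subcases, together with the realization that the central cocycle term in the bracket and the derivation action of $d\in\v^\dag$ on $\hat\gg$ are matched precisely by this dual-basis identity.
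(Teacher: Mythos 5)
Your proposal is correct and follows essentially the same route as the paper: dispose of the trivially zero cases of $([x,y],z)_{_\fl}=(x,[y,z])_{_\fl}$ by inspecting which components pair nontrivially, reduce the all-$\hat\gg$ case to invariance on $\hat\gg$ since the central correction lies in $\v$ and pairs to zero with $\hat\gg$, and settle the remaining cases (one $\v^\dag$-entry, two $\hat\gg$-entries) via the identities in (\ref{e1}) and the duality $(\lam_i,d_j)_{_\fl}=\d_{i,j}$ --- the paper just works with basis elements $d_j$ and leaves nondegeneracy, evenness and supersymmetry as trivial, whereas you treat a general $d\in\v^\dag$ through the dual-basis expansion and spell those parts out.
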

\begin{proof}
It is trivial that this form is nondegenerate, even and supersymmetric, so we just prove that it is invariant.
We first consider the following easy table:
$${\small
\begin{tabular}{|l|l|l|}
\hline
$(x,y,z)\in$&$([x,y],z)\in $&$(x,[y,z])\in$\\
\hline
$(\hat\gg,\hat\gg,\v)$& $(\hat\gg+\v,\v)_{_\fl}=\{0\}$ &$ (\hat\gg,\{0\})_{_\fl}=\{0\}$\\
\hline
$(\v,\hat\gg,\hat\gg)$& $(\{0\},\hat\gg)_{_\fl}=\{0\}$ &$ (\v,\hat\gg+\v)_{_\fl}=\{0\}$\\
\hline
$(\hat\gg,\v, \v\cup\v^\dag\cup \hat\gg)$& $(\{0\},\fl)_{_\fl}=\{0\}$& $(\hat\gg,\{0\})_{_\fl}=\{0\}$\\
\hline
$(\hat \gg,\v^\dag,\v\cup\v^\dag)$&$(\hat\gg,\v\cup\v^\dag)_{_\fl}=\{0\}$& $(\hat\gg,\{0\})_{_\fl}=\{0\}$\\
\hline
$(\v\cup\v^\dag,\hat\gg,\v\cup\v^\dag)$& $(\hat\gg,\v\cup\v^\dag)_{_\fl}=\{0\}$&$(\v\cup\v^\dag,\hat\gg)_{_\fl}=\{0\}$\\
\hline
$ (\v\cup\v^\dag,\v\cup\v^\dag,\hat\gg)$ & $(\{0\},\hat\gg)_{_\fl}=\{0\}$ &$(\v\cup\v^\dag,\hat\gg)_{_\fl}=\{0\}$\\
\hline
$ (\v\cup\v^\dag,\v\cup\v^\dag,\v\cup\v^\dag)$ & $(\{0\},\v\cup\v^\dag)_{_\fl}=\{0\}$ &$(\v\cup\v^\dag,\{0\})_{_\fl}=\{0\}$\\
\hline
\end{tabular}}$$
Then we  note that if $x,y,z\in\hat\gg,$ we have  {\small$$([x,y],z)_{_\fl}=([x,y]_{_{\hat\gg}},z)_{_\fl}=([x,y]_{_{\hat\gg}},z)=(x,[y,z]_{_{\hat\gg}})=(x,[y,z]_{_{\hat\gg}})_{_\fl}=(x,[y,z])_{_\fl}$$}and for $x,y\in\hat\gg$ and $z=d_j\in\v^\dag$ $(j\in I),$ we get {\small \begin{eqnarray*}
([x,y],z)_{_\fl}=([x,y]_{\hat\gg}+\sum_{i\in I}(d_i(x),y)\lam_i,d_j)_{_\fl}=(d_j(x),y)
&\stackrel{(\ref{e1})}{=}&
-(x,d_j(y))\\
&=&-(x,[d_j,y])_{_\fl}\\
&=&(x,[y,z])_{_\fl}.
\end{eqnarray*}}
Considering the latter equality, as the form is supersymmetric,  for $y,z\in\hat\gg$ and $x=d_j\in\v^\dag$ $(j\in I),$ we have
 {\small \begin{eqnarray*}
([x,y],z)_{_\fl}=(-1)^{|y||z|}(z,[x,y])=-(-1)^{|y||z|}(z,[y,x])&=&-(-1)^{|y||z|}([z,y],x)\\
&=&-(-1)^{|y||z|}(x,[z,y])\\
&=&(x,[y,z]).
 \end{eqnarray*}}
Finally, for  $x,z\in\hat\gg$
and $y=d_j\in\v^\dag$ $(j\in I),$ one has {\small $$([x,y],z)_{_\fl}=-([d_j,x],z)_{_\fl}=-(d_j(x),z)\stackrel{(\ref{e1})}{=}(x,d_j(z))=(x,[d_j,z])=(x,[y,z])_{_\fl}.$$}
This completes the proof.
\end{proof}

Now set  $\fh:=(\hh\ot\bbbf)\op\v\op\v^\dag$ and take   $R$ to be  the root system of $\gg$ with respect to $\hh.$ We consider $\a\in R$ as an element of $\fh^*$ by $$\a({\v\op\v^\dag}):=\{0\} \andd \a(h\ot 1):=\a(h)\;\;\; (h\in\hh).$$ We also consider $\lam\in \v$ as an element of $\fh^*$ by $$\lam((\fh\ot\bbbf)\op\v):=\{0\}\andd \lam(d):=d(\lam)\;\;\; (d\in\v^\dag).$$ Then $\fl$ has  a weight space decomposition with respect to $\fh$  with the corresponding root system  $\mathfrak{R}=\{\a+\lam\mid \a\in R,\lam\in\Lam\};$  moreover, we have
$$\begin{array}{l}
\fl^0=\fh\andd \fl^{\a+\lam}=\gg^\a\ot \bbbf t^\lam \;\;\;(\a\in R,\lam\in\Lam \hbox{ with $\a+\lam\neq0$}).
\end{array}$$
Suppose $\lam\in \Lam$ and $\a\in R_i$ ($i\in\{0,1\}$) with $\a+\lam\neq0.$ Use Lemma \ref{symm}($iii$) together with the fact that $f\fm$ is nondegenerate on $\hh$ to fix $x\in\gg_{\bar i}^\a,y\in \gg_{\bar i}^{-\a}$ with $f(x,y)=1$ and $[x,y]\in \hh.$ Take $t_\a$ to be the unique element of $\hh$ representing $\a$ through $f\fm.$ We have
\begin{eqnarray*}
[x\ot t^\lam,\theta(\lam,-\lam)^{-1}y\ot t^{-\lam}]=(t_\a\ot 1)+\sum_{i\in I}d_i(\lam)\lam_i= (t_\a\ot 1)+\lam\in \fh\setminus\{0\}.
\end{eqnarray*}
It follows that  $(\fl,\fh,\fm)$ is an extended affine Lie superalgebra with root system $\mathfrak{R}.$
\hfill$\diamondsuit$}
\end{Example}

For a unital associative algebra $\aa$ and nonempty index supersets $I,J,$ by an $I\times J$-matrix
with entries in $\aa,$ we mean a map $A:I\times J\longrightarrow \aa.$ For $i\in I,j\in J,$ we set
$a_{ij}:=A(i,j)$ and call it {\it $(i,j)$-th entry} of $A.$ By a convention, we denote the matrix
$A$ by $(a_{ij}).$  We also denote the set of all $I\times J$-matrices  with entries in $\aa$ by
$\aa^{I\times J}$ and note that it is a vector superspace, under the componentwise summation and scalar product, with $$\aa^{I\times J}_{\bar i}:=\{A\in \aa^{I\times J}\mid A(I_{\bar t}\times J_{\bar s} )=0;\; \hbox{ $t,s\in\{0,1\}$ with $\bar t+\bar s=\bar i+\bar 1$}\},$$ for $i=0,1.$
 If $A=(a_{ij})\in\aa^{I\times J}$ and
$B=(b_{jk})\in \aa^{J\times K}$ are such that for all $i\in I$ and $k\in K,$ at most for finitely many $j\in J,$ $a_{ij}b_{jk}$'s are nonzero, we define the product $AB$ of $A$ and $B$ to be the
$I\times K$-matrix $C=(c_{ik})$ with $c_{ik}:=\sum_{j\in J}a_{ij}b_{jk}$ for all $i\in I,k\in K.$
We note that if $A,B,C$ are three matrices such that $AB,$ $(AB)C,$ $BC$ and $A(BC)$ are defined,
then $A(BC)=(AB)C.$
 For $i\in I,j\in J$ and $a\in \aa,$ we define $E_{ij}(a)$ to be  the matrix in $\aa^{I\times J}$ whose $(i,j)$-th entry is
`` $a$ " and other entries are zero and if $\aa$ is unital, we set $$e_{i,j}:=E_{i,j}(1).$$  Take $M_{I\times J}(\aa)$ to be the subsuperspace of $\aa^{I\times J}$ spanned by
$\{E_{ij}(a)\mid i\in I,j\in J,a\in A\};$ in fact  $M_{I\times J}(\aa)$  is a superspace with $M_{I\times J}(\aa)_{\bar i}=\hbox{span}_\bbbf\{E_{r,s}(a)\mid |r|+|s|=\bar i\},$ for $i=0,1.$ Also with respect to the  multiplication of matrices, the vector superspace
$M_{I\times I}(\aa)$ is an associative $\bbbf$-superalgebra and so is a Lie superalgebra under the Lie bracket
$[A,B]:=AB-(-1)^{|A||B|}BA$ for all $A,B\in M_{I\times I}(\aa).$ We denote  this Lie superalgebra  by
$\mathfrak{pl}_I(\mathcal{A}).$
 For an element $X\in \mathfrak{pl}_I(\mathcal{A}),$ we set $str(X):=\sum_{i\in I}(-1)^{|i|}x_{i,i}$ and call it the {\it supertrace} of $X.$ We finally  make a convention that if $I$ is a disjoint union of nonempty  subsets
$I_1,\ldots, I_t$ of $I,$ then for an $I\times I$-matrix $A,$ we write
$$A=\left [\begin{array}{ccc} A_{1,1}&\cdots& A_{1,t}\\
A_{2,1}&\cdots&A_{2,t}\\
\vdots&\vdots&\vdots\\
A_{t,1}&\cdots&A_{t,t}\\
\end{array}\right ]$$
in which for $1\leq r,s\leq t,$ $A_{r,s}$ is an $I_r\times I_s$-matrix whose $(i,j)$-th entry
coincides with $(i,j)$-th entry of $A$ for all $i\in I_r,j\in I_s.$ In this case, we say that $A\in \aa^{I_1\uplus\cdots\uplus I_t}$ and note that the defined matrix product  obeys the product of block matrices.
\smallskip

In the next example, we realize a certain extended affine Lie superalgebra using an ``{\it affinization}'' process. To this end, we need to fix some notations. Suppose that   $A$ is a unital associative algebra and  ``$\;*\;$'' is an involution on $\aa$ that is a self-inverting linear endomorphism of $A$ with $(ab)^*=b^*a^*$ for all $a,b\in A.$  We next assume $\dot I,\dot J,\dot K$ are  nonempty  index sets with disjoint copies $\bar {\dot I}=\{\bar i\mid i\in \dot I\},$ $\bar {\dot J}=\{\bar j\mid j\in \dot J\}$ and $\bar {\dot K}=\{\bar k\mid k\in \dot K\}$ respectively. Suppose that $0,0',0''$ are three  distinct symbols and by a convention,  take  $\bar0:=0,$ $\bar0':=0'$ and $\bar0'':=0''.$ We take $I$ to be either $\dot I\uplus\bar{\dot I}$ or $\{0\}\uplus \dot I\uplus\bar{\dot I},$ $J$ to be either $\dot J\uplus\bar{\dot J}$ or $\{0'\}\uplus\dot J\uplus\bar{\dot J},$ and  $K$ to be either $\dot K\uplus\bar{\dot K}$ or $\{0''\}\uplus \dot K\uplus\bar{\dot K}.$  For a matrix $X=(x_{ij})\in M_{I\times J}(\aa),$ define $X^{\diamond}$  to be  the matrix $(y_{ji})$ of $M_{J\times I}(\aa)$ with $y_{ji}:= x_{\bar i\bar j}^*$ ($i\in I,j\in J$) where for an index  $t\in I\cup J,$ by $\bar{\bar t},$ we mean $t.$
It is immediate that for a matrix $X=(x_{ij})\in M_{I\times I}(\aa),$ \begin{equation}\label{dia1}tr(X^{\diamond})=(tr(X))^*.\end{equation} Also if   $X=(x_{ij})\in M_{I\times J}(\aa)$ and $Y\in M_{J\times K}(\aa),$ then for $i\in I$ and $k\in K,$ we have
\begin{eqnarray*}(XY)^\diamond_{ki}=(\sum_{j\in J}x_{\bar ij}y_{j\bar k})^*=\sum_{j\in J} y_{j\bar k}^* x_{\bar ij}^*=\sum_{j\in J} y_{\bar j\bar k}^* x_{\bar i\bar j}^*=\sum_{j\in J} Y^\diamond_{kj} X^\diamond_{ji}=(Y^\diamond X^\diamond)_{ki}\end{eqnarray*} which implies that \begin{equation}\label{dia2}(XY)^\diamond=Y^\diamond X^\diamond.\end{equation}

 \begin{Example}

 {\rm
In this example, we assume that the field  $\bbbf$ contains a forth primitive  root of unity and call it $\zeta.$ Suppose that $G$ is a torsion free additive abelian group and  $\lam$  is a commutative $2$-cocycle satisfying $\lam(0,0)=1.$ Take  $\aa$ to be the commutative   associative torus corresponding to $(G, \lam).$ Take $*$ to be an involution of $\aa$ mapping $\aa^\tau$ to $\aa^\tau$ for all $\tau\in G$ and suppose that $I$ and $J$ are as in the previous paragraph such that  $I\cap J=\emptyset$ and that  $|I|\neq|J|$ if $I$ and $J$ are both finite.
%
Consider $I\uplus J$ as  a superset with $|i|:=\bar 0$ and $|j|:=\bar 1$ for $i\in I$ and $j\in J$ and take $\LL:=\mathfrak{pl}_{I \uplus J}(\aa).$  One knows that for ${}_{[\tau]}\LL:=\{(x_{ij})\in \LL\mid x_{ij}\in\aa^\tau;\;\;\forall i,j\in I\uplus J\}$ ($\tau\in G$),
 ${\displaystyle\LL=\bigoplus_{\tau\in G}{}_{[\tau]}\LL}$ is a $G$-graded Lie superalgebra.
Set $$\gg:=\mathfrak{sl}_\aa(I,J):=\{A\in \mathfrak{pl}_{I\uplus J}(\aa)\mid str(A)=0\}\simeq \frak{sl}_\bbbf(I,J)\ot \aa.$$ As $ \gg$ is a subsuperalgebra of $\LL$ generated by $\{E_{i,j}(a)\mid i,j\in I\uplus J, i\neq j, a\in\aa\},$ it follows that  $\gg$ is a $G$-graded subsuperalgebra of $\LL.$
Setting  $\hh:=\hbox{span}_\bbbf\{e_{i,i}-e_{r,r},e_{j,j}-e_{s,s},e_{i,i}+e_{j,j}\mid i,r\in I, j,s\in J\},$
one knows   that  $\gg$ has a weight space decomposition $\gg=\op_{\a\in \hh^*}\gg^\a$ with respect to $\hh$ with the corresponding root system $$R=\{\ep_i-\ep_r,\d_j-\d_s,\ep_i-\d_j,\d_j-\ep_i\mid i,r\in I, j,s\in J\},$$ where  for $t\in I$ and $p\in J,$
$$\begin{array}{l}\ep_t:\hh\longrightarrow \bbbf;\;\;\; e_{i,i}-e_{r,r}\mapsto \d_{i,t}-\d_{r,t},e_{j,j}-e_{s,s}\mapsto 0,e_{i,i}+e_{s,s}\mapsto \d_{i,t},\\
\d_p:\hh\longrightarrow \bbbf;\;\;\; e_{i,i}-e_{r,r}\mapsto0,e_{j,j}-e_{s,s}\mapsto  \d_{j,p}-\d_{p,s},e_{i,i}+e_{j,j}\mapsto \d_{p,j},
\end{array} $$ $(i,r\in I,\;\; j,s\in J)$
and for $ i,r\in I$ and $ j,s\in J$ with $i\neq r$ and $j\neq s,$ we have
$$
\begin{array}{ll}
\gg^{\ep_i-\ep_r}=\aa e_{i,r},\;\; \gg^{\d_j-\d_s}=\aa e_{j,s},\;\;
\gg^{\ep_i-\d_j}=\aa e_{i,j},\;\;\gg^{\d_j-\ep_i}=\aa e_{j,i},\\
\displaystyle{\gg^0=\{A=\sum_{t\in I\uplus J}a_{tt}e_{t,t}\in \frak{pl}_{I\uplus J}(\aa)\mid str(A)=0\}.}
\end{array}
$$  For $\a\in R$ and $\tau\in G,$ setting  $${}_{[\tau]}\gg^\a:={}_{[\tau]}\gg\cap\gg^\a,$$ we have
$$
\begin{array}{ll}
{}_{[\tau]}\gg^{\ep_i-\ep_r}=\aa^\tau e_{i,r},\;\; {}_{[\tau]}\gg^{\d_j-\d_s}=\aa^\tau e_{j,s},\;\;
{}_{[\tau]}\gg^{\ep_i-\d_j}=\aa^\tau e_{i,j},\;\;{}_{[\tau]}\gg^{\d_j-\ep_i}=\aa^\tau e_{j,i},\\
\displaystyle{{}_{[\tau]}\gg^0=\{A=\sum_{t\in I\uplus J}a_{tt}e_{t,t}\in \frak{pl}_{I\uplus J}(\aa)\mid a_{tt}\in\aa^\tau\;(t\in I\uplus J), str(A)=0\}}
\end{array}
$$ for $ i,r\in I$ and $ j,s\in J$ with $i\neq r$ and $j\neq s.$

Now take $\ep:\aa\longrightarrow \bbbf$ to be a linear function defined by  $$x^\tau\mapsto \left\{\begin{array}{ll}0&\hbox{if $\tau\neq0$}\\
1&\hbox{if $\tau=0$}\end{array}\right.\;\;\;\; (\tau\in G). $$ Define   $$\fm:\gg\times\gg\longrightarrow\bbbf;\;\;(x,y)\mapsto \ep(str(xy)) .$$
This defines  a nondegenerate  invariant even supersymmetric bilinear  form on $\gg.$

Next take $\v:=\bbbf\ot_\bbbz G.$ Since $G$ is torsion free, we can identify $G$ with a subset of $\v$ in a usual manner. We next   fix a basis $B:=\{\tau_t\mid t\in T\}\sub G$ of $\v.$ Suppose that $\{d_t\mid t\in T\}$ is its dual basis  and  take $\v^\dag$ to be  the restricted dual of $\v$ with respect to this basis. Each $d\in \v^\dag$ can be considered as a derivation of $\gg$  (of degree $0$) by $d(x):=d(\tau)x$ for each $x\in {}_{[\tau]}\gg$ ($\tau\in G$).
 Set  $$\fl:=\gg\op\v\op\v^\dag.$$
We  extend the form on
$\gg$
  to a nondegenerate   even supersymmetric bilinear form $\fm$ on $\fl$ by
\begin{equation*}\label{formloop2}
\begin{array}{c}
{\small (\v,  \v)=(\v^\dag,\v^\dag)=(\v,\gg)=(\v^\dag,\gg):=\{0\}\andd (d,v):=d(v)\quad  (d\in \v^\dag, v\in \v).}
\end{array}
\end{equation*}

We also  define
\begin{equation*}\label{bracketloop22}\begin{array}{l}
  \;[d,x]=-[x,d]=d(x),  \quad (d\in\v^\dag,x\in\gg)\\
 \;[\v,\fl]=\{0\},\\
 \;[\v^\dag,\v^\dag]=\{0\},\\
\;[x,y]=[x,y]_{_{\gg}}+\sum_{t\in T}(d_t(x),y)\tau_t, \quad
(x,y\in \gg) \end{array}\end{equation*}
in which $[\cdot,\cdot]_{_\gg}$ is the bracket on $\gg.$ Setting $\fh:=\hh\op\v\op\v^\dag,$ as in the previous example, one gets that   $(\fl,\fh,\fm)$  is an extended affine  Lie superalgebra
with root system $$\mathfrak{R}=\{\a+\tau\mid \a\in R,\tau\in G\}$$ in which  $\a\in R$ is considered as an element of $\fh^*$ by $$\a({\v\op\v^\dag}):=\{0\},$$ and $\tau\in \v$ is considered  as an element of $\fh^*$ by $$\tau(\hh\op\v):=\{0\}\andd \tau(d):=d(\tau)\;\;\; (d\in\v^\dag).$$ We also have
$$\begin{array}{l}
\fl^0=\fh\andd \fl^{\a+\tau}={}_{[\tau]}\gg^\a \;\;\;(\a\in R,\tau\in G \hbox{ with $\a+\tau\neq0$}).
\end{array}$$
Next for $A=\left(\begin{array}{ll}X&Y\\Z&W\end{array}\right)\in\LL=\frak{pl}_{I\uplus J}(\aa),$ define $A^\#:=\left(\begin{array}{ll}-X^\diamond&Z^\diamond\\-Y^\diamond&-W^\diamond\end{array}\right).$ We have $[A,B]^\#=[A^\#,B^\#]$ and so $\#$ is a Lie superalgebra automorphism  of $\LL$ of order 4. Since $\#$ maps $\gg$  to $\gg$ (see (\ref{dia1})),  we consider $\#$ as a Lie superalgebra automorphism of $\gg$ as well.
Suppose that $M=\left(\begin{array}{ll}X&Y\\Z&W\end{array}\right),N=\left(\begin{array}{ll}A&B\\C&D\end{array}\right)$ are elements of $\gg,$
then we have
{\small
\begin{eqnarray}
(M^\#,N^\#)&=&\ep(str (M^\#N^\#))\nonumber\\&=&\ep(tr (X^\diamond A^\diamond-Z^\diamond B^\diamond)-tr(-Y^\diamond C^\diamond+ W^\diamond D^\diamond))\nonumber\\
&\stackrel{(\ref{dia2})}{=}&\ep(tr ((AX)^\diamond +(CY)^\diamond- (BZ)^\diamond -(DW)^\diamond))\nonumber\\
&=& \ep(tr ((AX)^\diamond) +tr ((CY)^\diamond)- tr ((BZ)^\diamond) -tr ((DW)^\diamond))\nonumber\\
&\stackrel{(\ref{dia1})}{=}&\ep((tr (AX))^*) +(tr (CY))^*)- (tr (BZ))^*) -(tr (DW))^*))\label{equality}\\
&=&\ep(tr (AX)  +tr (CY)- tr (BZ) -tr (DW))\nonumber\\
&=&\ep(tr (XA)+tr (YC)- tr (ZB) -tr (WD))\nonumber\\
&=& \ep(str(MN))=(M,N).\nonumber
\end{eqnarray}}
We also have \begin{equation}\label{equality1} d(x^\#)=(d(x))^\#;\;\; d\in \v^\dag,\; x\in \gg.\end{equation}
Now extend $\#$ to  $\fl$ by $v^\#:=v$ for $v\in\v\op\v^\dag.$ It follows from (\ref{equality}) and (\ref{equality1}) that    $\#$ is an automorphism of $\fl$ of order $4$ mapping  $\fh$ to $\fh.$
So we have $\fl=\op_{i=0}^3{}^{[i]}\fl,$ where for $i\in\bbbz,$  $${}^{[i]}\fl:=\{x\in\fl\mid x^\#=\zeta^i x\}$$  in which $[i]$ indicates the congruence of $i\in\bbbz$ modulo $4\bbbz.$ Using  (\ref{equality}) together with  the fact that the form on $\fl$ is nondegenerate, for $i,j\in\bbbz,$ we have \begin{equation}\label{form}({}^{[i]}\fl,{}^{[j]}\fl)\neq \{0\}\hbox{ if and only of $i+j\in 4\bbbz.$}\end{equation}
Next take $\sg$ to be the restriction of $\#$ to $\fh$ and  set $\mathfrak{h}^\sg$ to be the set of fixed points of $\fh$ under $\sg.$ Consider  a linear endomorphism of $\fh^*$ mapping $\a\in\fh^*$ to $\a\circ\sg^{-1}$ and denote it again by $\sg.$ The Lie superalgebra $\fl$ has a weight space decomposition
$\fl=\sum_{\{\pi(\a)\mid\a\in \frak{R}\}}\fl^{\pi(\a)}$ with respect to $\fh^\sg$ where  $$\pi(\a):=(1/4)(\a+\sg(\a)+\sg^2(\a)+\sg^3(\a))\;\;\; (\a\in \frak{R})$$ (see \cite[(2.11) \& Lem. 3.7]{ahy}). Moreover, we have
\begin{eqnarray*} \pi(\frak{R})&=&\{\pi(\a)\mid \a\in\mathfrak{R}\}\\
&=&\{\tau\mid \tau\in G\}\\
&\cup&\{\frac{1}{2}((\ep_i-\ep_r)+(\ep_{\bar r}-\ep_{\bar i}))+\tau\mid  \tau\in G,i,r\in I; i\neq r\}\\
&\cup& \{\frac{1}{2}((\d_j-\d_s)+(\d_{\bar s}-\d_{\bar j}))+\tau\mid \tau\in G, j,s\in J; j\neq s\}\\
&\cup&
 \{\frac{1}{2}((\ep_i-\d_j)+(\d_{\bar j}-\ep_{\bar i}))+\tau\mid \tau\in G, i\in I,j\in J\}\\
 &\cup& \{\frac{1}{2}((\d_j-\ep_i)+(\ep_{\bar i}-\d_{\bar j})+\tau\mid \tau\in G, i\in I, j\in J\}\\
 &=&\{\tau\mid \tau\in G\}\\
&\cup&
 \{\frac{1}{2}((\ep_i-\ep_{\bar i})-(\ep_{ r}-\ep_{\bar r}))+\tau\mid \tau\in G, i,r\in I; i\neq r\}\\
 &\cup&
 \{\frac{1}{2}((\d_j-\d_{\bar j})-(\d_{s}-\d_{\bar s}))+\tau\mid \tau\in G, j,s\in J; j\neq s\}\\
 &\cup&
 \{\frac{1}{2}((\ep_i-\ep_{\bar i})-(\d_{ j}-\d_{\bar j}))+\tau\mid \tau\in G, i\in I, j\in J\}\\
 &\cup&
 \{\frac{1}{2}((\d_{ j}-\d_{\bar j})-(\ep_i-\ep_{\bar i}))+\tau\mid \tau\in G, i\in I, j\in J\}
 \end{eqnarray*}
which is an extended affine root supersystem of type  $BC(I,J)$ if   $\{0,0'\}\cap (I\cup J)\neq \emptyset,$ and it  is of type $C(I,J),$ otherwise; see \cite{you6} for the notion of type for an extended affine root supersystem.
We next note that   for $i\in\{0,1,2,3\},$ as  ${}^{[i]}\fl$ is an $\fh^\sg$-submodule of $\fl,$ it  inherits  the   weight space decomposition ${\displaystyle{}^{[i]}\fl=\sum_{\{\pi(\a)\mid\a\in \frak{R}\}}{}^{[i]}\fl^{\pi(\a)}}$ from $\fl$ in which ${}^{[i]}\fl^{\pi(\a)}:={}^{[i]}\fl\cap\fl^{\pi(\a)}.$  We recall  (\ref{form}) together with the fact that  the form $\fm$ is nondegenerate. So if $\a,\b\in \frak{R}$ and $i,j\in\bbbz,$ we get that \begin{equation}\label{form1}
\begin{array}{l}\hbox{ for $0\neq x\in{}^{[i]}\fl^{\pi(\a)},$} (x,{}^{[j]}\fl^{\pi(\b)})\neq \{0\}\\
\hbox{ if and only if  $i+j\in 4\bbbz$ and $\pi(\a)+\pi(\b)=0.$}
\end{array}\end{equation}
Now we set $$\tilde\fl:=\sum_{i\in\bbbz}({}^{[i]}\fl\ot\bbbf t^i)\op\bbbf c\op\bbbf d$$ where $c,d$ are two symbols. Since $\#$  preserves the $\bbbz_2$-grading on $\fl,$ $\tilde\fl$ is a superspace with $$\tilde\fl_{\bar 0}:=\sum_{i\in\bbbz}(({}^{[i]}\fl\cap \fl_{\bar 0})\ot\bbbf t^i)\op\bbbf c\op\bbbf d\andd \tilde\fl_{\bar 1}:=\sum_{i\in\bbbz}(({}^{[i]}\fl\cap \fl_{\bar 1})\ot\bbbf t^i).$$ Moreover, $\tilde\fl$ together with the following bracket \\

$\begin{array}{ll}
[x\ot t^i+rc+sd,y\ot t^j+r'c+s'd]^{\;\tilde{}}&:=[x,y]\ot t^{i+j}+i\d_{i,-j}(x,y)c\\
&+sj y\ot t^j-s'ix\ot t^i
\end{array}$

\noindent is a Lie superalgebra equipped with   a weight space decomposition with respect to $(\mathfrak{h}^\sg\ot \bbbf)\op\bbbf c\op\bbbf d.$ More precisely, if we  define
$$\d:(\fh^\sg\ot\bbbf)\op\bbbf c\op\bbbf d\longrightarrow \bbbf;\;\;c\mapsto0,d\mapsto1,h\ot1\mapsto 0\;\;(h\in\fh^\sg),$$ then we get that  $\tilde{\mathfrak{R}}:=\{\pi(\a)+i\d\mid \a\in \mathfrak{R} ,i\in\bbbz,\;{}^{[i]}\fl\cap \fl^{\pi(\a)}\neq\{0\}\}$ is the corresponding  root system of $\tilde\fl$ in which $\pi(\a)$ is considered as an element of the dual space of $(\fh^\sg\ot\bbbf)\op\bbbf c\op\bbbf d$  mapping $h\ot 1\in \fh^\sg\ot\bbbf$  to $\a(h)$ and $c,d$ to $0.$ Furthermore, since for each $\a\in \frak{R}\setminus\{0\},$ $\pi(\a)\neq 0,$ as in \cite[Cor. 3.26]{ABP}, we have $ {}^{[0]}\fl^{\pi(0)}=\fh^\sg.$ So   for $\a\in \mathfrak{R}$ and $i\in\bbbz,$ we have 

$$\tilde\fl^{\pi(\a)+i\d}=\left\{\begin{array}{ll}
({}^{[i]}\fl\cap \fl^{\pi(\a)})\ot t^i& \hbox{if }(\a,i)\neq(0,0)\\
(\fh^\sg\ot 1)\op \bbbf c\op\bbbf d& \hbox{if } (\a,i)=(0,0).\\
\end{array}
\right.$$
We  extend the form on
$\fl$
  to a  supersymmetric  bilinear form $\fm^{\tilde{}}$ on $\tilde\fl$ by
\begin{equation}\label{formloop}
\begin{array}{l}
(c,d)^{\tilde{}}=1,\;(c,c)^{\tilde{}}=(d,d)^{\tilde{}}=(c,{}^{[i]}\fl\ot\bbbf t^i)^{\tilde{}}=(d,{}^{[i]}\fl\ot\bbbf t^i)^{\tilde{}}:=\{0\}\;\;(i\in\bbbz)\\
(x\ot t^i,y\ot t^j)^{\tilde{}}=\d_{i+j,0}(x,y)\;\; (i,j\in\bbbz,x\in{}^{[i]}\fl,y\in {}^{[j]}\fl).
\end{array}
\end{equation}
Since the form on $\fl$ is even and nondegenerate, $\fm^{\tilde{}}$ is also even and nondegenerate; moreover, by (\ref{form1}), if  $j\in\{0,1\},$  $\a\in \mathfrak{R}$ and $i\in\bbbz$ with $(i,\a)\neq(0,0)$ such that ${}^{[i]}\fl^{\pi(\a)}\cap \fl_{\bar j} \neq\{0\},$ we have $({}^{[i]}\fl^{\pi(\a)}\cap \fl_{\bar j},{}^{[-i]}\fl^{\pi(-\a)}\cap \fl_{\bar j})\neq \{0\}.$  Using this together with the fact that $ {}^{[0]}\fl^{\pi(0)}=\fh^\sg$ and Lemma \ref{symm}($iii$), one finds $x\in {}^{[i]}\fl^{\pi(\a)}\cap \fl_{\bar j},y\in {}^{[-i]}\fl^{\pi(-\a)}\cap \fl_{\bar j}$ with $0\neq [x,y]\in \fh^\sg.$ So $$[x\ot t^i,y\ot t^{-i}]^{\;\tilde{}}=[x,y]\ot 1+i(x,y)c\in (\fh^\sg\ot 1)\op \bbbf c\op\bbbf d\setminus\{0\}.$$ Also as $\pi({\mathfrak{R}})$ is an extended affine root supersystem, it follows that $ad_x$ is locally nilpotent for all $x\in\tilde\fl^{\tilde\a}$ where $\tilde\a\in {\tilde{\mathfrak{R}}}_{re}^\times.$
So   $$(\tilde\fl,(\fh^\sg\ot \bbbf)\op\bbbf c\op\bbbf d,\fm^{\tilde{}})$$ is an extended affine Lie superalgebra with corresponding root system $\tilde{\mathfrak{R}}.$ \hfill{$\diamondsuit$}}
\end{Example}

\centerline{Acknowledgement}
This research was in part
supported by a grant from IPM (No. 92170415) and partially carried out in IPM-Isfahan branch.

\end{document}